\documentclass[smallcondensed]{svjour3}

\textwidth 16cm
\textheight 22cm
\oddsidemargin 0cm
\evensidemargin 0cm
\topmargin -1cm
\usepackage{latexsym,mathrsfs}
\usepackage{amsmath,amssymb}
\usepackage{enumerate,verbatim}
\usepackage{amsfonts,cite,color}
\usepackage{float}

\newdimen\graphsize
\usepackage{epsf}
\usepackage{graphicx}
\newcommand{\IR}{{\mathbb{R}}}

\newcommand{\cS}{{\mathcal{S}}}

\numberwithin{equation}{section}

\def\R{{\rm I\!R}}
\def\Z{\rm \mathbb{Z}}
\def\cS{{\cal S}}

\def\CP{{\cal CP}}
\def\N{{\cal N}}
\def\SDD{\textup{SDD}}
\def\Diag{\textup{Diag}}
\def\DD{\textup{DD}}
\date{}

\newcommand{\revise}[1]{\textcolor{black}{\ignorespaces#1\ignorespaces}}   
\newcommand{\rerevise}[1]{\textcolor{black}{\ignorespaces#1\ignorespaces}}   

\def\CoP{{\cal COP}}
\def\C{{\cal C}}

\title{Inner approximating the completely positive cone via the cone of scaled diagonally dominant matrices}

\author{Jo\~{a}o Gouveia \and Ting Kei Pong \and Mina Saee\thanks {
The first author's research was partially supported by FCT under grants UID/MAT/00324/2019 through CMUC, and P2020 SAICTPAC/0011/2015. The second author's research was supported partly by a research grant (G-UADF) from The Hong Kong Polytechnic University and the third author's research was supported by a PhD scholarship from FCT, grant PD/BD/128060/2016.
}
}
\institute{
J. Gouveia \at CMUC, Department of Mathematics, University of Coimbra, Portugal\\
\email{jgouveia@mat.uc.pt}
\and
T. K. Pong \at
Department of Applied Mathematics, The Hong Kong Polytechnic University, Hong Kong, People's Republic of China\\
\email{tk.pong@polyu.edu.hk}
\and
M. Saee \at
Department of Mathematics, University of Coimbra, Portugal\\
\email{minasaee@mat.uc.pt}}

\begin{document}

\maketitle

\begin{abstract}
  Motivated by the expressive power of completely positive programming to encode hard optimization problems, many approximation schemes for the completely positive cone have been proposed and successfully used. Most schemes are based on outer approximations, with
the only inner approximations available being a linear programming based method proposed by Bundfuss and D\"{u}r \cite{BunDur09} and also Y\i ld\i r\i m \cite{Yil12}, and a semidefinite programming based method proposed by Lasserre \cite{Lasserre14}.
In this paper, we propose the use of the cone of nonnegative scaled diagonally dominant matrices as a natural inner approximation to the
completely positive cone. Using projections of this cone we derive new graph-based second-order cone approximation schemes for completely
positive programming, leading to both uniform and problem-dependent hierarchies. This offers a compromise between the expressive power of
semidefinite programming and the speed of linear programming based approaches. Numerical results on random problems, \rerevise{standard quadratic programs} and the stable set problem are presented to illustrate the effectiveness of our approach.
\end{abstract}

%
%
%

\section{Introduction}

Copositive programming and its dual \revise{counterpart} of completely positive programming are classes of convex optimization problems that have in the past decades developed as a particularly expressive tool to encode optimization problems, especially for many problems arising from combinatorial or quadratic optimization. A classical example of that can be found in \cite{Bur09}, which shows that general quadratic programs with a mix of binary and continuous variables can be expressed as copositive programs. A large body of work has been developed in the area and there is a series of survey papers that can be consulted for further information. We refer the readers to \cite{Bom12,Bur12,Dur10} and references therein for more details.

In this paper we will focus on general completely positive programs which are linear optimization problems of the form (see Section~\ref{sec1.1} below for notation)
\begin{equation}\label{P0}
  \begin{array}{rl}
    v_p:=\min & {\rm tr}(CX)\\
    {\rm s.t.} & {\rm tr}(A_i X) = b_i,\ i = 1,\ldots,m,\\
    & X \in \CP^n,
  \end{array}
\end{equation}
where $C$ and $A_i$, $i = 1,\ldots,m$ are symmetric matrices, and $\CP^n$ is the closed cone of \revise{$n \times n$} completely positive matrices \revise{defined as
	\begin{align}
	\CP^n& := \{X\in \cS^n:\; \exists B \ge 0, \ \ X = B^TB\}.\label{CPn}
	\end{align}}

We also consider the dual problem of \eqref{P0}, which is the following copositive programming problem
\begin{equation}\label{P0dual}
\begin{array}{rl}
v_d:= \max & b^Ty\\
{\rm s.t.} & C - \sum_{i=1}^m y_i A_i \in \CoP^n,
\end{array}
\end{equation}
where $\CoP^n$ is the closed cone of \revise{$n \times n$} copositive matrices \revise{and is defined as
	\begin{align}
	\CoP^n& := \{X\in \cS^n:\; v^TXv \ge 0,\ \ \forall v\ge 0\}.\label{COPn}
	\end{align}}

It is well known that completely positive programming problems \eqref{P0} are NP-hard in general. Several approximation schemes have been proposed and successfully used in the literature, based on approximations to $\CP^n$. The simplest one is to replace $\CP^n$ by the cone of nonnegative positive semidefinite matrices, which is strictly larger \revise{than $\CP^n$ } when $n\ge 5$, hence leading to a lower bound to $v_p$. Other popular lower bounds are those relying on semidefinite programming sums of squares techniques as introduced in \cite{Par00}. For upper bounds based on inner approximations to $\CP^n$, the literature is somewhat sparser.

One way of constructing inner approximations to $\CP^n$ is to make use of the fact that the extreme rays of $\CP^n$ are matrices of the form $vv^T$ with $v\in \R_+^n\backslash\{0\}$; \revise{see \cite{abraham2003completely}}. Thus, one can pick uniformly spaced $v \in \Delta^n=\left\{ x \in \R_+^n :\; \sum x_i = 1\right\}$, and approximate $\CP^n$ by the cone the matrices $vv^T$ generate (see \cite{Yil12,BunDur09}). This leads to linear programming (LP) approximations to \eqref{P0}. Another inner approximation to $\CP^n$ is that proposed in \cite{Lasserre14}, based on the theory of moments, leading to semidefinite programming (SDP) approximations to \eqref{P0}. In both cases we have hierarchies that give upper bounds to \eqref{P0}, and dually lower bounds to \eqref{P0dual}, and converge to the optimal value/solutions of \eqref{P0}. These inner approximations are uniform (i.e., problem-independent) approximations, giving rise to either LP or SDP problems. See also \cite{Yil16} for a more thorough treatment of inner approximations. An extra step taken as an adaptive linear approximation algorithm was proposed in \cite{BunDur09}. This uses information obtained from an upper bound approximation to selectively refine the hierarchy, leading to problem-dependent LP approximations.

In this paper, we propose a new inner approximation scheme to $\CP^n$ that is based on {\em second-order cone programming} (SOCP) problems and can be either uniform or problem-dependent. Our approach is motivated by the recent work in \cite{Ahm14,AhmHal17} that uses the cone of scaled diagonally dominant matrices for inner-approximating the cone of positive semidefinite matrices. Specifically, we use the cones of \revise{nonnegative scaled diagonally dominant matrices} and their projections as a natural inner approximation to $\CP^n$, and derive a new SOCP-based approximation scheme for completely positive and copositive programming. Our approximation scheme has a natural graphical interpretation. By exploiting this interpretation, we can flexibly expand or trim the SOCP problems in our hierarchy, leading to both uniform and problem-dependent approximation schemes. The use of SOCP offers a compromise between the expressive power of SDP, that comes at a \revise{significant computational cost}, and the speed of LP approaches, that have inherently lower expressive power. Numerical experiments on solving random instances, \rerevise{standard quadratic programs} and the stable set problem demonstrate the effectiveness of our approximation schemes.

%

The rest of the paper is organized as follows. We present notation and state our blanket assumptions concerning \eqref{P0} and \eqref{P0dual} in Section~\ref{sec1.1}. Properties of the scaled diagonally dominant matrices are reviewed in Section~\ref{sec:sdd}, and a graphical refinement scheme is discussed. We derive our uniform inner approximation schemes in Section~\ref{sec3} with a convergence analysis, and discuss several problem-dependent inner approximation schemes in Section~\ref{sec4}. Numerical experiments are reported in Section~\ref{sec:num}.

\subsection{Notation and blanket assumptions}\label{sec1.1}

In this paper, we use $\cS^n$ to denote the space of \revise{$n \times n$} symmetric matrices. Matrices are denoted by upper case letters, and their entries are represented in the corresponding lower case letters, e.g., $d_{ij}$ as the $(i,j)$th entry of the matrix $D$; we also use lower case letters to denote vectors. For vectors $u$, $v\in \R^n$, we write $u\ge 0$ if $u$ is elementwise nonnegative, and use $[u,v]$ to denote the line segment between $u$ and $v$, i.e.,
\[
[u,v] := \{tu + (1-t)v:\; t\in [0,1]\}.
\]
For an $X\in \cS^n$, we write $X\succeq 0$ if \revise{$X$} is positive semidefinite, and write $X\ge 0$ if \revise{$X$} is elementwise nonnegative. We also write the trace of $X$ as ${\rm tr}(X)$. We use $E$ and $I$ to denote the square matrix of all ones and the identity matrix, respectively, whose dimensions should be clear from the context.  Finally, for a linear map ${\cal A}:\cS^n\to \cS^m$, we use ${\cal A}^*$ to denote its adjoint.

The cone of positive semidefinite \revise{$n \times n$} matrices is denoted by $\cS^n_+$. We also use $\N^n$ to denote the cone of \revise{$n \times n$} symmetric nonnegative matrices, i.e.,
\[
\N^n:=\{X\in \cS^n:\; X\ge 0\},
\]
 \revise{ and an \revise{$n \times n$} real symmetric matrix is doubly nonnegative if it is positive semidefinite and entrywise nonnegative.}
It is known that the cones in \eqref{CPn} and \eqref{COPn} are dual to each other, i.e., $\CP^n = (\CoP^n)^*$ and $\CoP^n = (\CP^n)^*$; here,
\[
{\C}^* := \{Y\in \cS^n:\; {\rm tr}(XY)\ge 0,\ \ \forall X\in {\C}\}
\]
for a closed convex cone ${\C}\subseteq \cS^n$. Moreover, it is also known that the cone of positive semidefinite matrices and the cone of symmetric nonnegative matrices are self-dual, i.e., $\cS^n_+ =(\cS^n_+)^*$ and $\N^n=(\N^n)^*$.

Throughout this paper, we make the following {\bf blanket assumptions} concerning \eqref{P0} and \eqref{P0dual}:
\begin{enumerate}[{\bf A}1.]
  \item Problem \eqref{P0} is feasible.
  \item The mapping $X\mapsto ({\rm tr}(A_1X),\ldots,{\rm tr}(A_mX))$ is surjective.
  \item Problem \eqref{P0dual} is strictly feasible, i.e., there exists $\bar y$ satisfying
  \[
  C - \sum_{i=1}^m \bar y_i A_i \in {\rm int}\,\CoP^n.
  \]
\end{enumerate}
Under these assumptions, the dual \revise{Slater} condition holds. Therefore we have $v_p = v_d$, with both values being finite and the primal optimal value $v_p$ being attained.

\section{The scaled diagonally dominant cone and beyond}\label{sec:sdd}

In this section, we present the basis for our construction of inner approximations in Sections~\ref{sec3} and \ref{sec4}. Our construction is motivated by the work in \cite{Ahm14,AhmHal17}, which studied inner approximations of the cone of positive semidefinite matrices based on the cones of diagonally dominant and scaled diagonally dominant matrices. While their work can be directly applied to the existing SOS hierarchies to yield outer approximations of $\CP^n$ (see \cite[Section 4.2]{Ahm14}) we show an alternative approach, based on the same cones but using them in a fundamentally different way, in order to obtain an inner approximation to $\CP^n$. We first recall the following definition of diagonally dominant and scaled diagonally dominant matrices from \revise{\cite[Definition~3.3]{Ahm14}}.
\begin{definition}\label{def:sdd}
A symmetric matrix $A$ is diagonally dominant\footnote{Note that our definition is different from the classical definition of diagonal dominance (see \cite[Definition~6.1.9]{HornJohnson85}) in that we require the diagonal entries of $A$ to be nonnegative.} if $a_{ii}\ge \sum_{j\neq i}|a_{ij}|$ for all $i$, and is said to be scaled diagonally dominant (sdd) if there exists a diagonal matrix $D$ with positive diagonal entries such that $DAD$ is diagonally dominant.
\end{definition}

In \cite[Theorem~9]{boman2005factor} a convenient characterization of sdd matrices was presented. They proved that a matrix is sdd if and only if it can be written as the sum of positive semidefinite matrices whose supports are contained in some \revise{$2 \times 2$} submatrices. In other words, the cone $\SDD^n$ of \revise{$n \times n$} sdd matrices is given by
\begin{equation}\label{sdd:eqdef}
\SDD^n := \sum_{1\le i<j\le n} \iota_{ij}(\cS^2_+),
\end{equation}
where $\iota_{ij}:\cS^2 \to \cS^n$ is the map that sends an $S\in \cS^2$ to the matrix $D$ given by
\[
d_{rs}:= \begin{cases}
  s_{11} & {\rm if}\ (r,s) = (i,i),\\
  s_{12} & {\rm if}\ (r,s) = (i,j),\\
  s_{21} & {\rm if}\ (r,s) = (j,i),\\
  s_{22} & {\rm if}\ (r,s) = (j,j),\\
  0 & {\rm otherwise}.
\end{cases}
\]
This cone is therefore given in terms of \revise{$2 \times 2$} semidefinite constraints or, in other words, second-order cone constraints, which makes it quite suitable to use in convex optimization.
One can prove the following basic properties of $\SDD^n$, and of the set $\SDD^n_+:= \SDD^n\cap \N^n$. Note that item (i) in Proposition~\ref{prop1} below can be found in \cite{AhmHal17}, and a more general version of it can be found in \cite[Lemma~5]{Permenter2017}. We include it here for completeness. \revise{In what follows $\iota_{ij}^*$ denotes the adjoint of the map $\iota_{ij}$, which in this case can be defined by saying that $\iota_{ij}^*(S)$ is the $2\times 2$ submatrix of $S$ indexed by rows and columns $i$ and $j$.}

\begin{proposition}\label{prop1}
  The following statements hold.
  \begin{enumerate}[{\rm (i)}]
    \item $(\SDD^n)^* = \{Q\in \cS^n:\; \iota_{ij}^*(Q)\succeq 0,\ \ \forall 1\le i< j\le n\}$.
    \item $(\SDD^n_+)^* = (\SDD^n)^* + \N^n$.
    \item $
  \SDD^n_+ = \sum_{1\le i<j\le n} \iota_{ij}(\cS^2_+\cap \N^2)$.
  \end{enumerate}
\end{proposition}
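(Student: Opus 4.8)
The statement has three parts, and I would organize the proof as a short duality computation for (i) and (ii) followed by the genuinely structural identity (iii). For part (i) the plan is to use that the dual of a Minkowski sum of cones is the intersection of the duals, combined with the adjoint relation $\langle Q,\iota_{ij}(S)\rangle=\langle\iota_{ij}^*(Q),S\rangle$. Since $\SDD^n=\sum_{i<j}\iota_{ij}(\cS^2_+)$, I get $(\SDD^n)^*=\bigcap_{i<j}(\iota_{ij}(\cS^2_+))^*$, and for a single pair $Q\in(\iota_{ij}(\cS^2_+))^*$ holds iff $\langle\iota_{ij}^*(Q),S\rangle\ge0$ for all $S\in\cS^2_+$, i.e.\ iff $\iota_{ij}^*(Q)\in(\cS^2_+)^*=\cS^2_+$. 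Intersecting over all pairs yields the claimed description; nothing is needed beyond self-duality of $\cS^2_+$, and no closedness hypothesis on the sum is required here.

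For part (ii), I would write $\SDD^n_+=\SDD^n\cap\N^n$ and invoke the standard fact that, for two closed convex cones whose relative interiors meet, the dual of the intersection equals the sum of the duals \emph{without} taking closures. By part (i) and the self-duality $(\N^n)^*=\N^n$, this sum is exactly $(\SDD^n)^*+\N^n$. The only point to verify is the constraint qualification: both $\SDD^n$ and $\N^n$ are full-dimensional, and a strictly diagonally dominant matrix with strictly positive entries, for instance $E+nI$ (whose diagonal is $n+1$ and whose off-diagonal row sums are $n-1$), lies in the interior of each, so their interiors intersect.

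Part (iii) is where the real content is. The inclusion $\supseteq$ is immediate: each summand $\iota_{ij}(\cS^2_+\cap\N^2)$ lies in both $\iota_{ij}(\cS^2_+)\subseteq\SDD^n$ and $\N^n$, and both $\SDD^n$ and $\N^n$ are closed under addition, so the whole sum sits inside $\SDD^n_+$. For the reverse inclusion I would take $A\in\SDD^n_+$, pick a positive diagonal $D$ with $B:=DAD$ diagonally dominant, and observe that $A\ge0$ forces $B\ge0$, so $b_{ii}\ge\sum_{j\neq i}b_{ij}$ for all $i$. Then I would apply the explicit diagonal-dominance decomposition
\[
B=\sum_{i}\Big(b_{ii}-\sum_{j\neq i}b_{ij}\Big)e_ie_i^T+\sum_{i<j}b_{ij}(e_i+e_j)(e_i+e_j)^T,
\]
all of whose coefficients are nonnegative. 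Each $(e_i+e_j)(e_i+e_j)^T$ is $\iota_{ij}$ of the $2\times2$ all-ones matrix, which lies in $\cS^2_+\cap\N^2$; each diagonal slack $e_ie_i^T$ can be absorbed into some block as $\iota_{ij}(\Diag(c,0))$ with $c\ge0$, again in $\cS^2_+\cap\N^2$. This gives $B\in\sum_{i<j}\iota_{ij}(\cS^2_+\cap\N^2)$, and finally I undo the scaling via $A=D^{-1}BD^{-1}$, using $D^{-1}\iota_{ij}(N)D^{-1}=\iota_{ij}(\Diag(d_{ii}^{-1},d_{jj}^{-1})\,N\,\Diag(d_{ii}^{-1},d_{jj}^{-1}))$ and the fact that conjugation by a positive diagonal preserves membership in $\cS^2_+\cap\N^2$.

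The main obstacle is precisely this reverse inclusion in (iii): one must decompose $A$ into $2\times2$ blocks that are \emph{simultaneously} positive semidefinite and entrywise nonnegative, and the blocks coming directly from $A\in\SDD^n$ need not be nonnegative. The device that resolves this is to pass to the scaled matrix $B=DAD$, where nonnegativity of the off-diagonal entries forces the rank-one generators $(e_i+e_j)(e_i+e_j)^T$ to be nonnegative, and to note that the final rescaling by $D^{-1}$ acts blockwise and respects both constraints. I would also check the harmless bookkeeping point that the diagonal slacks can indeed be distributed among the pairwise blocks, which uses only $n\ge2$ (the sole nontrivial case).
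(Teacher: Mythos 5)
Your argument for (i) and (ii) is essentially the paper's: both reduce (i) to the dual-of-a-sum identity from Rockafellar plus self-duality of $\cS^2_+$, and both get (ii) from the dual-of-an-intersection identity after exhibiting a Slater-type point (the paper uses $\sum_{i<j}\iota_{ij}(E)\in\SDD^n\cap{\rm int}\,\N^n$, you use $E+nI$; either constraint qualification works). Part (iii) is where you genuinely diverge. The paper stays entirely inside the representation \eqref{sdd:eqdef}: given $Q=\sum_{i<j}\iota_{ij}(S_{ij})$ with $S_{ij}\in\cS^2_+$, it observes that the off-diagonal entry of $S_{ij}$ is forced to equal $q_{ij}\ge 0$ (since no other summand touches position $(i,j)$), while the diagonal entries of $S_{ij}$ are nonnegative by positive semidefiniteness; hence each $S_{ij}$ is automatically in $\cS^2_+\cap\N^2$ and nothing needs to be re-decomposed. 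You instead return to the scaling definition of sdd, pass to $B=DAD$, use the explicit conic decomposition of a nonnegative diagonally dominant matrix into $e_ie_i^T$ and $(e_i+e_j)(e_i+e_j)^T$ terms, and conjugate back by $D^{-1}$. Your route is correct (the identity for $B$ checks out entrywise, the coefficients are nonnegative precisely because $B\ge 0$ turns $|b_{ij}|$ into $b_{ij}$, and conjugation by a positive diagonal acts blockwise and preserves $\cS^2_+\cap\N^2$), and it has the merit of being fully constructive — it essentially re-derives the $\DD^n_+$ conic-hull description the paper uses later — but it is longer and re-proves part of the Boman--Chen--Parekh--Toledo characterization that the paper simply invokes. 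The paper's one-line observation about the off-diagonal entry is the cheaper device; it is worth internalizing, since the same trick reappears verbatim in the proof of Theorem~\ref{prop:properties}(ii) for general graphs, where your scaling argument would not transfer as directly (the scaling characterization is specific to the complete graph).
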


\begin{proof}
  We first prove (i). Recall from \eqref{sdd:eqdef} that $\SDD^n = \sum_{1\le i<j\le n}\iota_{ij}(\cS^2_+)$. Thus, we have from \cite[Corollary~16.3.2]{Rock70} that
  \[
  (\SDD^n)^* = \bigcap_{1\le i<j\le n}(\iota_{ij}(\cS^2_+))^*,
  \]
  from which the desired equality follows immediately.

  Next, we prove (ii). Note that
  \[
  \sum_{1\le i<j\le n}\iota_{ij}(E)\in \SDD^n\cap {\rm int}\,\N^n.
  \]
  Thus, we conclude from \cite[Corollary~16.3.2]{Rock70} that
  \[
  (\SDD^n_+)^* = (\SDD^n\cap \N^n)^* = (\SDD^n)^* + \N^n.
  \]

  Finally, we prove (iii). It is clear that $\SDD^n_+ \supseteq \sum_{1\le i<j\le n} \iota_{ij}(\cS^2_+\cap \N^2)$. For the converse inclusion, consider any $Q\in \SDD^n_+$. Then $Q$ is nonnegative and can be written as $\sum_{1\le i<j\le n} \iota_{ij}(S_{ij})$ for some $S_{ij}\in \cS^2_+$, $1\le i<j\le n$. Observe that each $S_{ij}$ has nonnegative diagonal entries, and moreover, its nondiagonal entry equals the $(i,j)$th entry of $Q$, which is also nonnegative. Thus, $S_{ij}\in \cS^2_+\cap \N^2$ and hence $Q\in \sum_{1\le i<j\le n} \iota_{ij}(\cS^2_+\cap \N^2)$.
  This completes the proof.
\end{proof}

Since \revise{$2 \times 2$} nonnegative positive semidefinite matrices are completely positive, we see from Proposition~\ref{prop1}(iii) that $\SDD^n_+$ is an inner approximation to $\CP^n$. In Figure \ref{fig:comparison} we show a random $2$-dimensional slice of the cone of doubly nonnegative \revise{$5 \times 5$} matrices (i.e., $\cS_+^5\cap \N^5$) with the slice of  $\SDD^5_+$ highlighted in red. The cone $\CP^5$ is sandwiched between them.
\begin{figure}
\begin{center}
\includegraphics[width=5.5cm]{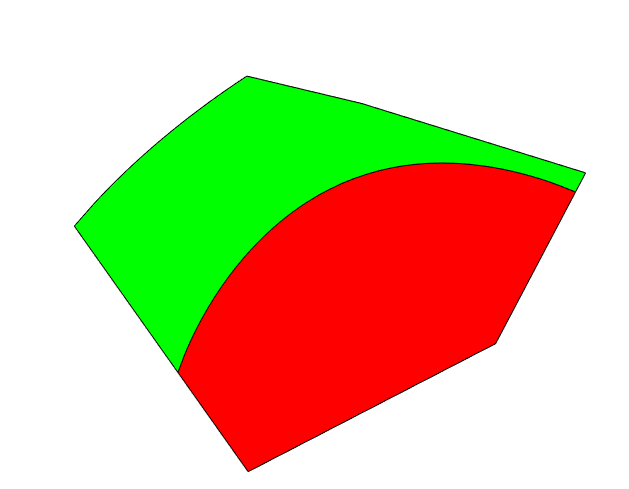}
\end{center}
\caption{Comparison of $\cS^5_+\cap \N^5$ with $\SDD^5_+$}
\label{fig:comparison}
\end{figure}

This simple inner approximation can be used as a basis to construct more general inner second-order cone approximations for $\CP^n$. To do that we consider a useful variant of $\SDD^n_+$ that will help us construct inner approximations of $\CP^n$.

\begin{definition}
Let $U\in \R^{t\times n}_+$ have \revise{row sum $1$}. Define
\begin{equation}\label{SDDplusU}
  \SDD^n_+(U) := \{U^TYU:\; Y \in \SDD^t_+\} =U^T (\SDD^t_+)U.
\end{equation}
\end{definition}
The above definition is similar to the development in \cite[Section~3.1]{AhmHal17}, which makes use of the so-called $\DD(U)$. Here we assume that $U$ has nonnegative entries so that $\SDD^n_+(U)$ will be a subcone of $\CP^n$; see Proposition~\ref{prop2} below. In addition, we assume that the rows of $U$ have sum one: we can then always think of the rows of $U$ as points in the simplex $\Delta^n$. This is no less general than just considering $U\in \R^{t\times n}_+$ with nonzero rows, because scaling rows of $U$ by positive scalars does not change $\SDD^n_+(U)$. \revise{Note that $\SDD^n_+(U)$ is simply a linear image of $\SDD^t_+$ into $\cS^n$. }

\revise{Some basic properties of this set are that $\SDD^n_+(I_n)=\SDD^n_+$, and that if $U\in \R^{t\times n}_+$ is a submatrix of $\tilde{U} \in \R^{s\times n}_+$ then $\SDD^n_+(U) \subseteq \SDD^n_+(\tilde{U})$. We show in the next example that $\SDD^n_+(U)$ can be strictly larger than $\SDD^n_+$ in general.}

\begin{example}\revise{
One can see that the matrix
$$M=\begin{bmatrix} 6 & 5 & 5 \\ 5 & 6 & 5 \\ 5 & 5 & 6 \end{bmatrix}$$
is in $\cS_+^3 \cap \N^3$. However, $M\notin \SDD^3_+$; indeed, if we define
\[
W:= \begin{bmatrix}
  1&-1&-1\\-1&1&-1\\-1&-1&1
\end{bmatrix},
\]
then ${\rm tr}(WM) < 0$ but $W\in (\SDD_+^3)^*$ thanks to Proposition~\ref{prop1}(ii), showing that $M\notin \SDD^3_+$.}

\revise{Now, suppose we set $U$ to be the $4 \times 3$ matrix
constructed from concatenating the identity $I_3$ with an all $\frac13$ row vector, i.e.,
\[
U = \begin{bmatrix}
  1&0&0\\
  0&1&0\\
  0&0&1\\
  \frac13&\frac13&\frac13
\end{bmatrix},
\]
and consider the set $\SDD^3_+(U)$. Then we know $\SDD^3_+ \subseteq \SDD^3_+(U)$ because $I_3$ is a submatrix of $U$. Furthermore, we have
\[
M=U^T \begin{bmatrix} 1 & 0 & 0 & 3 \\ 0 & 1 & 0 & 3 \\ 0 & 0 & 1 & 3 \\ 3 & 3 & 3& 27 \end{bmatrix} U \in \SDD^3_+(U),
\]
where the inclusion holds because
\[
\begin{bmatrix} 1 & 0 & 0 & 3 \\ 0 & 1 & 0 & 3 \\ 0 & 0 & 1 & 3 \\ 3 & 3 & 3& 27 \end{bmatrix}
= \begin{bmatrix} 1 & 0 & 0 & 3 \\ 0 & 0 & 0 & 0 \\ 0 & 0 & 0 & 0 \\ 3 & 0 & 0& 9 \end{bmatrix}
+\begin{bmatrix} 0 & 0 & 0 & 0 \\ 0 & 1 & 0 & 3 \\ 0 & 0 & 0 & 0 \\ 0 & 3 & 0& 9 \end{bmatrix}
+\begin{bmatrix} 0 & 0 & 0 & 0 \\ 0 & 0 & 0 & 0 \\ 0 & 0 & 1 & 3 \\ 0 & 0 & 3& 9 \end{bmatrix}\in \SDD^4_+.
\]
Consequently, $\SDD^3_+(U)$ is a strictly larger set than $\SDD^3_+$.}
\end{example}

We next give an important characterization of $\SDD^n_+(U)$ that is crucial in our development of inner approximation schemes in Sections~\ref{sec3} and \ref{sec4}. Recall from \eqref{CPn} that $\CP^n$ can be seen as the convex hull of all $vv^T$ with $v \in \R_+^n$. The next theorem shows that one can think of $\SDD^n_+(U)$ similarly.

\begin{theorem}\label{prop:sddUconehull}
Let $U\in \R^{t\times n}_+$ have \revise{row sum $1$}. Then $\SDD^n_+(U)$ is the conic hull of all $vv^T$ with $v$ belonging to some line segment $[u_i,u_j]$, where $u_i$ is the $i$-th row of $U$.
\end{theorem}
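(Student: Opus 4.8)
The plan is to reduce the whole statement to the characterization $\SDD^t_+=\sum_{1\le i<j\le t}\iota_{ij}(\cS^2_+\cap\N^2)$ from Proposition~\ref{prop1}(iii), together with one bookkeeping identity describing how the congruence $Y\mapsto U^TYU$ acts on a single summand. Writing $P_{ij}:=[u_i\ u_j]$ for the $n\times 2$ matrix whose two columns are the rows $u_i$ and $u_j$ of $U$, the key identity I would establish first is
\[
U^T\iota_{ij}(S)\,U = P_{ij}\,S\,P_{ij}^T .
\]
This follows by expanding $\iota_{ij}(S)=s_{11}e_ie_i^T+s_{12}(e_ie_j^T+e_je_i^T)+s_{22}e_je_j^T$ (with $e_i,e_j$ the standard basis vectors of $\R^t$) and using $U^Te_i=u_i$; it is a routine computation and I expect no difficulty here.

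First I would prove that $\SDD^n_+(U)$ is contained in the stated conic hull. Take $X\in\SDD^n_+(U)$, so $X=U^TYU$ with $Y\in\SDD^t_+$, and use Proposition~\ref{prop1}(iii) to write $Y=\sum_{i<j}\iota_{ij}(S_{ij})$ with each $S_{ij}\in\cS^2_+\cap\N^2$. Since $2\times 2$ nonnegative positive semidefinite matrices are completely positive, each $S_{ij}$ factors as $\sum_k w_k w_k^T$ with $w_k\in\R^2_+$. Applying the identity above term by term yields $X=\sum_{i<j}\sum_k (P_{ij}w_k)(P_{ij}w_k)^T$, and each vector $P_{ij}w_k=(w_k)_1u_i+(w_k)_2u_j$ is a nonnegative combination of $u_i$ and $u_j$. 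Rescaling by $\lambda:=(w_k)_1+(w_k)_2$ when $\lambda\neq 0$ exhibits $P_{ij}w_k=\lambda v$ with $v\in[u_i,u_j]$, so $(P_{ij}w_k)(P_{ij}w_k)^T=\lambda^2\,vv^T$ is a nonnegative multiple of $vv^T$ for a point $v$ on the segment $[u_i,u_j]$ (the term vanishes when $\lambda=0$). Hence $X$ is a finite conic combination of such generators.

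For the reverse inclusion I would first note that $\SDD^n_+(U)=U^T(\SDD^t_+)U$ is a convex cone, being a linear image of the convex cone $\SDD^t_+$, so it is enough to show it contains each generator $vv^T$ with $v=tu_i+(1-t)u_j\in[u_i,u_j]$. Setting $w:=(t,1-t)\in\R^2_+$ and invoking the identity again gives $vv^T=P_{ij}\,ww^T\,P_{ij}^T=U^T\iota_{ij}(ww^T)\,U$; since $ww^T\in\cS^2_+\cap\N^2$ we have $\iota_{ij}(ww^T)\in\SDD^t_+$, whence $vv^T\in\SDD^n_+(U)$. Because $\SDD^n_+(U)$ is a convex cone, it then contains the entire conic hull of these generators, which closes the argument. (The endpoints $v=u_i$ correspond to $w=(1,0)$ and are covered automatically.)

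The main obstacle is conceptual rather than computational: spotting the congruence identity $U^T\iota_{ij}(S)U=P_{ij}SP_{ij}^T$, which converts the abstract linear image $U^T(\cdot)U$ into a concrete statement about the pair of rows $u_i,u_j$, and then correctly translating the \emph{conic} combinations $w\in\R^2_+$ produced by the complete positivity of $S$ into \emph{convex} combinations, i.e.\ genuine points of the segment, via the homogenizing rescaling $v=w/(w_1+w_2)$. Once this normalization is in place the two inclusions are symmetric and short; the only points needing a one-line remark are the zero summands and the degenerate case $t=1$, neither of which causes real trouble.
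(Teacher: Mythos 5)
Your proposal is correct and follows essentially the same route as the paper: both reduce to Proposition~\ref{prop1}(iii), decompose each $2\times 2$ nonnegative PSD block as a sum of rank-one matrices $ww^T$ with $w\ge 0$, push this through the congruence $Y\mapsto U^TYU$, and rescale the resulting nonnegative combinations of $u_i,u_j$ onto the segment $[u_i,u_j]$. Your explicit identity $U^T\iota_{ij}(S)U=P_{ij}SP_{ij}^T$ and the spelled-out reverse inclusion are just slightly more detailed packagings of the same argument.
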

\begin{proof}
\revise{Note from Proposition~\ref{prop1}(iii) and \eqref{SDDplusU} that any matrix in $\SDD^n_+(U)$ can be written as
$$\rerevise{\sum_{1\leq i< j \leq t}U^T \iota_{ij}(S_{ij}) U}$$ for some $S_{ij} \in \cS_+^2\cap \N^2$.
Moreover, any matrix $S\in \cS_+^2\cap \N^2$ can be written as $S=v_1v_1^T+v_2v_2^T$ for some nonnegative vectors $v_i \in \IR_+^2$. Furthermore, we know that for any $v\in \IR^2$, it holds that $\iota_{ij}(vv^T)=ww^T$ where $w\in \mathbb{R}^t$ is the vector whose $i$th entry is $v_1$, $j$th entry equals $v_2$, and is zero otherwise. Hence, we deduce that any matrix in $\SDD^n_+(U)$ can be written as
$$\sum_{k=1}^N U^Tw_k w_k^TU,$$
where each $w_k\in \mathbb{R}^t$ is nonnegative and has a support of cardinality at most $2$. Conversely, it is easy to see that any matrix that can be written as such a sum is in $\SDD^n_+(U)$. But each $U^Tw$, with $w\neq 0$, is simply a (nonzero) conic combination of two rows of $U$, $u_i$ and $u_j$; so, up to positive scaling, it is in $[u_i,u_j]$, proving our claim.}
\end{proof}

We can now prove the following properties of $\SDD^n_+(U)$.

\begin{proposition}\label{prop2}
  Let $U\in \R^{t\times n}_+$ have \revise{row sum $1$}. Then the following statements hold.
  \begin{enumerate}[{\rm (i)}]
    \item The cone $\SDD^n_+(U)$ is a closed sub-cone of $\CP^n$.
    \item $(\SDD^n_+(U))^* = \{Y:\; UYU^T\in (\SDD^t_+)^*\} = \{Y:\; UYU^T\in (\SDD^t)^* + \N^t\}$.
  \end{enumerate}
\end{proposition}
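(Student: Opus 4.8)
The plan is to treat the two assertions separately: part (ii) is essentially a one-line duality computation, whereas the genuine work lies in the closedness claim of part (i). For the inclusion $\SDD^n_+(U)\subseteq\CP^n$ in part (i), I would invoke Theorem~\ref{prop:sddUconehull} directly, which presents $\SDD^n_+(U)$ as the conic hull of the matrices $vv^T$ with $v$ ranging over the segments $[u_i,u_j]$. Since every row $u_i$ is nonnegative, each such $v$ is a convex combination of nonnegative vectors and is therefore itself nonnegative, so $vv^T$ is one of the generators $vv^T$, $v\ge 0$, of $\CP^n$. As $\CP^n$ is a convex cone, it contains the conic hull of any family of its elements, which gives $\SDD^n_+(U)\subseteq\CP^n$.

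The closedness is the main obstacle, since the linear image $U^T(\SDD^t_+)U$ of a closed cone need not be closed in general, so it cannot simply be read off the definition. Here too I would rely on the description from Theorem~\ref{prop:sddUconehull}. The generating set
\[
K := \bigcup_{1\le i\le j\le t}\{vv^T:\; v\in[u_i,u_j]\}
\]
is a finite union of continuous images of compact segments and is hence compact. The crucial observation is that $0\notin{\rm conv}(K)$: every $v\in[u_i,u_j]$ is a convex combination of rows of $U$, each having entry-sum $1$, so $v$ has entry-sum $1$, whence ${\rm tr}(vv^T)=\sum_k v_k^2\ge \frac{1}{n}$ by Cauchy--Schwarz. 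Since the trace is linear, it stays bounded below by $\frac{1}{n}>0$ on ${\rm conv}(K)$, separating $K$ from the origin. I would then apply the standard fact that the conic hull of a compact set whose convex hull avoids the origin is closed: given an approximating sequence in the conic hull, the separating trace functional bounds the sum of its conic coefficients, so one extracts a convergent subsequence of coefficients and generators and passes to the limit. This yields closedness of $\SDD^n_+(U)$.

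For part (ii) I would compute the dual cone straight from $\SDD^n_+(U)=\{U^TYU:\,Y\in\SDD^t_+\}$. The adjoint of the linear map $Y\mapsto U^TYU$ is $Z\mapsto UZU^T$, because ${\rm tr}(Z\,U^TYU)={\rm tr}((UZU^T)Y)$ by the cyclic property of the trace. Hence $Z\in(\SDD^n_+(U))^*$ precisely when ${\rm tr}((UZU^T)Y)\ge 0$ for every $Y\in\SDD^t_+$, that is, when $UZU^T\in(\SDD^t_+)^*$; this establishes the first equality $(\SDD^n_+(U))^*=\{Y:\,UYU^T\in(\SDD^t_+)^*\}$. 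Note that this step requires no closedness, as the dual cone of a set coincides with that of its closed convex conic hull. The second equality is then immediate from Proposition~\ref{prop1}(ii) applied with $t$ in place of $n$, giving $(\SDD^t_+)^*=(\SDD^t)^*+\N^t$.
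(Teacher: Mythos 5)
Your proposal is correct and follows essentially the same route as the paper: inclusion in $\CP^n$ via Theorem~\ref{prop:sddUconehull}, closedness via the conic hull of a compact generating set separated from the origin, and part (ii) by the adjoint computation plus Proposition~\ref{prop1}(ii). Your explicit trace bound ${\rm tr}(vv^T)\ge 1/n$ merely fills in a detail the paper leaves implicit.
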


\begin{proof}
  From Theorem~\ref{prop:sddUconehull}, it follows that  $\SDD^n_+(U)$ is a sub-cone of $\CP^n$. It remains to prove closedness. Since $U$ is nonnegative and has no zero rows, the origin is not in the convex hull of $vv^T$, where $v$ belongs to some $[u_i,u_j]$, and $u_i$ is the $i$-th row of $U$. Hence $\SDD^n_+(U)$ is the conic hull of a compact convex set not containing the origin. Thus, it is closed.

  To prove (ii), recall that $\SDD^n_+(U) =U^T (\SDD^t_+)U$. From this we see that $Y\in (\SDD^n_+(U))^*$ if and only if
  \[
  {\rm tr}(Y(U^TWU))\ge 0\ \ \ \forall W\in \SDD^t_+,
  \]
  which is the same as $UYU^T\in (\SDD^t_+)^*$. This proves the first equality. The second equality in (ii) follows from Proposition~\ref{prop1}(ii). This completes the proof.
\end{proof}

Note that the construction of $\SDD^n_+(U)$ is fairly general. Anytime we have a cone $\C\subseteq \CP^t$ and a matrix $U \in \R_+^{t \times n}$ whose rows have sum one, one can define the cone
\begin{equation}\label{CU}
  \C(U) := \{U^TYU:\; Y \in {\C}\} =U^T {\C}U.
\end{equation}
This is easily seen to always verify $\C(U) \subseteq \CP^n$, since $\C(U) \subseteq U^T\CP^tU \subseteq\CP^n$. It is helpful to state in this language the usual LP inner approximations to $\CP^n$. Let $\Diag_+^n$ be the set of nonnegative \revise{$n \times n$} diagonal matrices. Clearly $\Diag_+^n \subseteq \CP^n$, so we can define
\begin{equation}\label{DiagplusU}
  \Diag_+^n(U) := \{U^TYU:\; Y \in \Diag_+^t\}.
\end{equation}
This is nothing more than the conic hull of the matrices $u_iu_i^T$, $i=1,\dots,t$, where $u_i$ is the $i$-th row of $U$. The use of \eqref{DiagplusU} for inner approximation corresponds to the standard LP approximation strategy used, for example, in \cite{BunDur09}, where strategies for efficient choices of $U$ were explored.

Another possibility for obtaining an LP relaxation would be to use the cone of \revise{$n \times n$} symmetric nonnegative diagonally dominant matrices, denoted by $\DD^n_+$. We have $\Diag_+^n \subseteq \DD_+^n \subseteq \SDD_+^n$. So, if we define
\begin{equation}\label{DDplusU}
  \DD_+^n(U) := \{U^TYU:\; Y \in \DD_+^t\},
\end{equation}
we would get $\Diag_+^n(U) \subseteq \DD_+^n(U) \subseteq \SDD_+^n(U)$. However, since one can easily see that $\DD_+^n$ is the conic hull of $(e_i+e_j)(e_i+e_j)^T$ for $1 \leq i \leq j \leq n$, it is not hard to see that $\DD_+^n(U)$ is simply the conic hull of $(u_i+u_j)(u_i+u_j)^T$ for $1 \leq i \leq j \leq t$, and hence can be expressed in terms of $\Diag_+^n(U')$ for some $U'$ that contains $U$ as a submatrix.

Other choices would be to use not submatrices in $\cS_+^2$, as we did for $\SDD^n_+$, but matrices in $\cS_+^3$ or $\cS_+^4$. Note that it is still true in these two cases that $\cS_+^i \cap \N^i\subseteq \CP^i$. 
These cones would give better approximations, but we would get a much higher number of constraints that would not be second-order cone constraints but fully semidefinite. \revise{While the semidefinite constraints would still be small, the process would become more cumbersome and significantly less tractable}.

\subsection{A graphical refinement}

We saw above that $\SDD^n_+(U)$ is a natural inner approximation to $\CP^n$. Furthermore, Theorem~\ref{prop:sddUconehull} suggests that the fundamental property of $U$ that guides the approximation is the collection of segments $[u_i,u_j]$. \revise{We might associate to the points $u_i$ vertices of a graph, and to the segments its edges, and think of the collection of points and segments as a concrete realization of the graph in $\R^n$. This insight can be used to refine the approximation, making it more flexible.}
We start by generalizing the notion of $\SDD$.

Given a graph $G$ with vertex set $\{1,\dots,n\}$ and edge set ${\cal E}$, we define
\[
\SDD^G := \sum_{\{i,j\} \in {\cal E}} \iota_{ij}(\cS^2_+),
\]
\revise{and we set $\SDD^G := \{0\}$ if ${\cal E} = \emptyset$ by convention. The graph $G$ simply encodes which principal $2\times 2$ submatrices will be required to be semidefinite. In particular,} if we consider $G$ to be the complete graph $K^n$, this is simply $\SDD^n$. We can define $\SDD_+^G$ as the nonnegative matrices in $\SDD^G$, similarly as before. Then we can naturally define a generalization of $\SDD_+^n(U)$\revise{:}

\begin{definition}
\revise{For a graph $G$ with $t$ vertices and a matrix $U \in \R_+^{t \times n}$ whose rows have sum one,} we define the cone $\SDD^G_+(U)$ as
 $$ \SDD^G_+(U) := \{U^TYU:\; Y \in \SDD^G_+\} =U^T (\SDD^G_+)U.$$
\end{definition}

\revise{It will be helpful to think of the rows of $U$ as points in the standard simplex $\Delta^n$ (i.e. with nonnegative coordinates summing to one). These points correspond to vertices of the graph $G$, and the edge set of $G$ simply encodes which pairs of rows of $U$ (vertices) are ``connected".} In other words, the pair $(G,U)$ is a realization of the graph $G$ inside \revise{ $\Delta^n$  with segments for edges}. We will denote by ${\rm seg}(G,U)$ the set of points in some \revise{ of the segments}, i.e,
$${\rm seg}(G,U)= \bigcup_{\{i,j\} \in {\cal E}} [u_i,u_j],$$
where $u_i$ is the $i$-th row of $U$. \revise{ This set completely controls the geometry of the cone. Based on this notion and the proof of Theorem~\ref{prop:sddUconehull}, we can immediately obtain the following refinement of Theorem~\ref{prop:sddUconehull} for the representation of $\SDD^G_+(U)$.}
\revise{\begin{theorem}\label{prop:sddGUconehull}
Let $G$ be a graph with $t$ vertices and $U \in \R_+^{t \times n}$ be a matrix whose rows have sum one. Then $\SDD^G_+(U)$ is the conic hull of all $vv^T$ with $v \in {\rm seg}(G,U)$.
\end{theorem}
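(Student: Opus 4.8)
The plan is to reduce Theorem~\ref{prop:sddGUconehull} to the proof of Theorem~\ref{prop:sddUconehull}, observing that the only feature of the complete-graph case used there was the decomposition of an $\SDD_+$ matrix into $2\times 2$ blocks indexed by pairs, and that this decomposition is exactly what the definition of $\SDD^G_+$ restricts to the edge set $\cal E$. So first I would unwind the definitions: by Proposition~\ref{prop1}(iii) applied with the sum taken over $\{i,j\}\in{\cal E}$ rather than over all pairs (the identical block-decomposition argument goes through verbatim, since nonnegativity forces each summand into $\cS^2_+\cap\N^2$), any $Y\in\SDD^G_+$ can be written as $\sum_{\{i,j\}\in{\cal E}}\iota_{ij}(S_{ij})$ with each $S_{ij}\in\cS^2_+\cap\N^2$. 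Then I would apply $U^T(\cdot)U$ and use the two facts already established inside the proof of Theorem~\ref{prop:sddUconehull}: every $S\in\cS^2_+\cap\N^2$ splits as $v_1v_1^T+v_2v_2^T$ with $v_\ell\in\IR_+^2$, and $\iota_{ij}(vv^T)=ww^T$ where $w\in\IR^t$ has support $\{i,j\}$ with entries $v_1,v_2$.

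From here the argument is essentially a copy of the earlier proof, so I would state it concisely. Each matrix of $\SDD^G_+(U)$ is a conic sum of terms $U^Tww^T U=(U^Tw)(U^Tw)^T$, where $w\ge 0$ has support contained in some edge $\{i,j\}\in{\cal E}$. The crucial point, and the one place the graph structure enters, is that now the support of each $w$ is constrained to an \emph{edge} of $G$, not merely to an arbitrary pair: hence $U^Tw$ is a nonnegative combination $w_iu_i+w_ju_j$ of the two rows of $U$ indexed by the endpoints of that edge. Up to positive scaling, $U^Tw$ therefore lies on the segment $[u_i,u_j]\subseteq{\rm seg}(G,U)$, so each generating rank-one matrix is $vv^T$ for some $v\in{\rm seg}(G,U)$. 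This gives the inclusion of $\SDD^G_+(U)$ in the stated conic hull.

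For the reverse inclusion I would run the computation backwards. Any $v\in{\rm seg}(G,U)$ lies on some segment $[u_i,u_j]$ with $\{i,j\}\in{\cal E}$, so $v=U^Tw$ for a nonnegative $w$ supported on $\{i,j\}$; then $vv^T=U^Tww^T U=U^T\iota_{ij}(\tilde S)U$ for the rank-one $\tilde S\in\cS^2_+\cap\N^2$ corresponding to $w$. Since $\iota_{ij}(\tilde S)\in\SDD^G_+$ by definition of $\SDD^G$ (the block sits on an edge), we get $vv^T\in\SDD^G_+(U)$; closure under conic combinations then yields the whole conic hull, completing the two-way containment. I would also dispose of the degenerate case ${\cal E}=\emptyset$ separately, where $\SDD^G_+=\{0\}$ by the stated convention and ${\rm seg}(G,U)=\emptyset$, so both sides are $\{0\}$.

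I do not expect any genuine obstacle: the theorem is deliberately framed as an ``immediate refinement,'' and the entire content is the bookkeeping observation that replacing $K^n$ by $G$ replaces ``all pairs $\{i,j\}$'' by ``edges $\{i,j\}\in{\cal E}$'' uniformly throughout the proof of Theorem~\ref{prop:sddUconehull}. The only point requiring a word of care is the scaling normalization, namely that a nonzero nonnegative conic combination of $u_i$ and $u_j$ is a positive multiple of a point of the segment $[u_i,u_j]$ (using that the rows have row sum one, so such combinations are nonzero and their rescaling lands in $\Delta^n$); but this is precisely the normalization already invoked at the end of the earlier proof, so I would simply cite that step rather than repeat it.
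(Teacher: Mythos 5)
Your proof is correct and follows exactly the route the paper intends: the paper states Theorem~\ref{prop:sddGUconehull} as an immediate refinement obtained by rerunning the proof of Theorem~\ref{prop:sddUconehull} with the sum over all pairs replaced by a sum over $\{i,j\}\in{\cal E}$, which is precisely what you carry out (with the welcome extra care about the ${\cal E}=\emptyset$ convention and the rescaling into $[u_i,u_j]$).
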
}
\revise{Theorem~\ref{prop:sddGUconehull} gives a simple way of translating results from the graph language to results about cones. In particular if we have ${\rm seg}(G,U) \subseteq {\rm seg}(G',U')$, we have $\SDD^G_+(U) \subseteq \SDD^{G'}_+(U')$, and
furthermore $\SDD^G_+(U) \subseteq \SDD^{K^t}_+(U)=\SDD^n_+(U) \subseteq \CP^n$, for all graphs $G$ with $t$ vertices and matrices $U \in \R_+^{t \times n}$ whose rows have sum one.} \revise{On the other hand, if every node of the graph $G$ is covered by some edges, then $\SDD^G_+(U) \supseteq \Diag_+^n(U)$, the usual LP inner approximation. Thus, the graphical notation allows us to construct intermediate approximations somewhere in between the simple LP inner approximation and the full $\SDD^n_+(U)$ version.}

We end the section by noting that most of our other previous results concerning $\SDD^n_+$ and $\SDD^n_+(U)$ can be adapted with no effort to this new cone.
\begin{theorem}\label{prop:properties}
Given a graph $G$ with \revise{$t$} vertices and edge set ${\cal E}$, and a matrix $U \in \R_+^{t \times n}$ whose rows have sum one, we have the following properties.
\begin{enumerate}[{\rm (i)}]
 \item  $(\SDD^G)^* = \{Q\in \cS^n:\; \iota_{ij}^*(Q)\succeq 0\ \ \forall \{i,j\} \in {\cal E} \}$;
 \item  $\SDD^G_+ = \sum_{\{i,j\}\in {\cal E}}\iota_{ij}(S^2_+\cap \N^2)$;
 \item  $(\SDD^G_+(U))^* = \{Y:\; UYU^T\in (\SDD^G_+)^*\}$;
 \item  $\SDD^G_+(U)$ is a closed sub-cone of $\CP^n$.
 \end{enumerate}
\end{theorem}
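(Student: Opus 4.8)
The plan is to observe that each of the four items is the exact graph-theoretic analogue of a statement already established for the complete graph, and that the corresponding proofs transfer essentially verbatim once the index set of all pairs $1\le i<j\le n$ is replaced by the edge set ${\cal E}$. Concretely, (i) mirrors Proposition~\ref{prop1}(i), (ii) mirrors Proposition~\ref{prop1}(iii), (iii) mirrors the first equality of Proposition~\ref{prop2}(ii), and (iv) mirrors Proposition~\ref{prop2}(i). The convention $\SDD^G=\{0\}$ when ${\cal E}=\emptyset$ causes no trouble: in that degenerate case every claim reduces to a vacuous or trivial identity, so I would assume ${\cal E}\ne\emptyset$ in the main argument and dispatch the empty case separately.

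For (i), I would write $\SDD^G=\sum_{\{i,j\}\in{\cal E}}\iota_{ij}(\cS^2_+)$ and use that the dual of a Minkowski sum of convex cones is the intersection of their duals; this is the unconditional direction of \cite[Corollary~16.3.2]{Rock70} and requires no constraint qualification, so it applies even when $\SDD^G$ is low-dimensional. Since $(\iota_{ij}(\cS^2_+))^*=\{Q:\ \iota_{ij}^*(Q)\in(\cS^2_+)^*\}$ and $\cS^2_+$ is self-dual, intersecting over ${\cal E}$ yields the claimed description. For (ii), the inclusion $\supseteq$ is immediate, and for $\subseteq$ I would take $Q\in\SDD^G_+$, write $Q=\sum_{\{i,j\}\in{\cal E}}\iota_{ij}(S_{ij})$ with $S_{ij}\in\cS^2_+$, and note that each $S_{ij}$ automatically has nonnegative diagonal. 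The one point deserving care---and the only genuine obstacle, slight as it is---is the bookkeeping of supports: each off-diagonal position $(i,j)$ with $\{i,j\}\in{\cal E}$ receives a contribution from the single summand $\iota_{ij}(S_{ij})$ only, so the off-diagonal entry of $S_{ij}$ equals $q_{ij}\ge0$. This forces $S_{ij}\in\cS^2_+\cap\N^2$, giving the reverse inclusion.

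For (iii), I would use $\SDD^G_+(U)=U^T(\SDD^G_+)U$ together with the adjoint identity ${\rm tr}(YU^TWU)={\rm tr}((UYU^T)W)$ to see that $Y\in(\SDD^G_+(U))^*$ if and only if ${\rm tr}((UYU^T)W)\ge0$ for all $W\in\SDD^G_+$, i.e. $UYU^T\in(\SDD^G_+)^*$, exactly as in Proposition~\ref{prop2}(ii). Finally, for (iv) I would invoke Theorem~\ref{prop:sddGUconehull}, which identifies $\SDD^G_+(U)$ with the conic hull of $\{vv^T:\ v\in{\rm seg}(G,U)\}$; membership in $\CP^n$ is then immediate since each such $v$ is nonnegative. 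Closedness follows from the same compactness argument as in Proposition~\ref{prop2}(i): every segment $[u_i,u_j]$ lies in the simplex $\Delta^n$ because the rows of $U$ sum to one, so ${\rm seg}(G,U)$ is a compact set of vectors with coordinate sum one and hence avoids the origin; thus $\{vv^T:\ v\in{\rm seg}(G,U)\}$ is compact and bounded away from the origin, and the conic hull of such a set is closed.
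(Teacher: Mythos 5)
Your proof is correct and follows exactly the route the paper intends: the paper's own proof of this theorem is the one-line remark that it is immediate from the proofs of Proposition~\ref{prop1} and Proposition~\ref{prop2}, and you have simply carried out that transfer explicitly (replacing the index set of all pairs by ${\cal E}$, and handling the empty-edge-set and origin-avoidance details correctly). Nothing further is needed.
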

\begin{proof}
\revise{Immediate from the proofs of Proposition~\ref{prop1} and Proposition~\ref{prop2}.} 
\end{proof}

\section{Inner approximation schemes for the completely positive cone}\label{sec3}

The main idea of this section is to approximate the solution to \eqref{P0} by using the cones  $\SDD^G_+(U)$ to replace $\CP^n$. More concretely
our scheme is based on the following family of optimization problems, which depends on a graph $G$ on \revise{$t$} vertices and a $U\in \R_+^{t\times n}$ whose rows have sum one:
\begin{equation}\label{SDDP0}
  \begin{array}{rl}
    v_p(G,U):=\min & {\rm tr}(CX)\\
    {\rm s.t.}& {\rm tr}(A_iX) = b_i,\  i =1,\ldots,m,\\
    & X\in \SDD^G_+(U),
  \end{array}
\end{equation}
and its dual problem given by
\begin{equation}\label{SDDP0dual}
  \begin{array}{rl}
    v_d(G,U):=\max & b^Ty\\
    {\rm s.t.}& C - \sum_{i=1}^m y_i A_i\in (\SDD^G_+(U))^*.
  \end{array}
\end{equation}
Note that the semidefinite constraints in \eqref{SDDP0} are imposed only on $2\times 2$ matrices. Thus, these problems are SOCP problems.

Recall from Theorem~\ref{prop:properties} that $\SDD^G_+(U)$ and $(\SDD^G_+(U))^*$ are both closed convex cones. Also, notice that \eqref{SDDP0dual} has a strictly feasible point due to \revise{Assumption {\bf A}3} and the fact that $\CoP^n\subseteq (\SDD^G_+(U))^*$ (which follows from $\SDD^G_+(U) \subseteq \CP^n$).
Consequently, if Problem \eqref{SDDP0} is feasible, then $v_p(G,U) = v_d(G,U)$, both values are finite and $v_p(G,U)$ is attained. Moreover, we conclude from $\SDD^G_+(U) \subseteq \CP^n$ that $v_p(G,U)\ge v_p$.
Furthermore, we have already pointed out that augmenting the embedded graph $(G,U)$ leads to an enlargement in $\SDD^G_+(U)$. In view of these observations, we will discuss strategies for constructing an ``enlarging" sequence of graphs $\{(G^k,U^k)\}$ to possibly tighten the gap $v_p(G^k,U^k) - v_p$ as $k$ increases.

To simplify our terminology, we make the following definition.

\begin{definition}
  A sequence of embedded graphs $\{(G^k,U^k)\}$ is called a positively enlarging sequence if ${\rm seg}(G^k,U^k) \subseteq {\rm seg}(G^{k+1},U^{k+1})$, each $U$ is a nonnegative matrix having at least $n$ rows, each row of $U$ (the realizations of vertices of $G$) sums to one, and each node of $G$ is covered by at least one edge.
\end{definition}

Positively enlarging sequences verify $v_p(G^k,U^k)\ge v_p(G^{k+1},U^{k+1})\ge v_p$ by construction. Furthermore, once \eqref{SDDP0} is feasible for some $k=k_0$, it will remain feasible whenever $k\ge k_0$, since the sequence of sets $\{\SDD^{G_k}_+(U^k)\}$ are monotonically increasing. Moreover, we have noted above that we might think of the rows of $U$ to be in the simplex $\Delta^n$ so that we can think of this as an enlarging family of graphs embedded in $\Delta^n$.

We next study convergence of our inner approximation schemes for \eqref{P0} based on \eqref{SDDP0} when $\{(G^k,U^k)\}$ is a positively enlarging sequence. We first prove a convergence result concerning a similar approximation scheme, which uses ${\Diag}_+^n(U)$ (as defined in \eqref{DiagplusU}) in place of $\SDD^G_+(U)$ in \eqref{SDDP0}. This strategy was used in \cite{BunDur09}, which studied the pairs \eqref{SDDP0} and \eqref{SDDP0dual} with ${\Diag}_+^n(U)$ in place of $\SDD^G_+(U)$, and constructed an ``enlarging" sequence $\{U^k\}$ by adding new rows to $U^k$ from $\Delta^n$ at each step. To determine what rows to add, they solve another LP approximation scheme based on $U$, which they see as the set of vertices of a simplicial partition of $\Delta^n$, 
and use its results to construct a sequence of $\{U^k\}$ with an increasing number of rows. In studying the convergence of that method they proved a version of the following result for copositive programming problems in \cite[Theorem~4.2]{BunDur09}. The version presented below will be useful for studying convergence of our inner approximation schemes for \eqref{P0}.

\begin{theorem}\label{convergdiag}
Assume that \eqref{P0} is strictly feasible.
Let $\{U^k\}$ be a \revise{sequence of matrices} whose rows have sum one, where for each $k$, $U^k\in \R_+^{t_k\times n}$ for some $t_k\ge n$. Suppose that
\begin{equation}\label{limit}
\lim_{k\rightarrow\infty} \max_{x \in \Delta^n} \min_{i=1,\dots,t_k} \|x-u_{i}^k\| = 0,
\end{equation}
where $u_i^k$ is the $i$-th row of $U^k$. Consider for each $k$ the following problem:
\begin{equation}\label{Diag0}
  \begin{array}{rl}
    \tilde v_{p}(U^k):=\min & {\rm tr}(CX)\\
    {\rm s.t.}& {\rm tr}(A_iX) = b_i,\  i =1,\ldots,m,\\
    & X\in {\Diag}_+^n(U^k).
  \end{array}
\end{equation}
Then the following statements hold.
\begin{enumerate}[{\rm (i)}]
  \item $\tilde v_p(U^k)$ is finite for all sufficiently large $k$ and $\lim_{k\to\infty}\tilde v_p(U^k) = v_p$.
  \item The solution set of \eqref{Diag0} is nonempty and uniformly bounded for all sufficiently large $k$.
  \item Let $X^k$ be a solution of \eqref{Diag0} whenever the solution set is nonempty. Then any accumulation point of $\{X^k\}$ is a solution of \eqref{P0}.
\end{enumerate}
\end{theorem}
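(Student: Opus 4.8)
The plan is to sandwich $\tilde v_p(U^k)$ between $v_p$ and quantities converging to $v_p$. The lower bound is immediate: since $\Diag_+^n(U^k)\subseteq\CP^n$, every feasible point of \eqref{Diag0} is feasible for \eqref{P0}, so $\tilde v_p(U^k)\ge v_p$ whenever \eqref{Diag0} is feasible. All the work is in producing, for large $k$, feasible points of \eqref{Diag0} whose objective values approach $v_p$ from above. Throughout, write $\cA(X):=({\rm tr}(A_1X),\dots,{\rm tr}(A_mX))$ and let $\rho_k:=\max_{x\in\Delta^n}\min_i\|x-u_i^k\|\to 0$ be the covering radius in \eqref{limit}. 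The basic approximation tool is this: any $X\in\CP^n$ decomposes as $X=\sum_\ell c_\ell^2 w_\ell w_\ell^T$ with $c_\ell>0$ and $w_\ell\in\Delta^n$; replacing each $w_\ell$ by a nearest row $u_{i(\ell)}^k$ of $U^k$ yields $\sum_\ell c_\ell^2 u_{i(\ell)}^k(u_{i(\ell)}^k)^T\in\Diag_+^n(U^k)$ converging to $X$ as $k\to\infty$, since $\|u_{i(\ell)}^k-w_\ell\|\le\rho_k$.

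For statement (i), I would first pass from a minimizer $X^*$ of \eqref{P0} (which exists by the blanket assumptions) to an interior feasible point. Using strict feasibility of \eqref{P0}, fix $\hat X\in{\rm int}\,\CP^n$ with $\cA(\hat X)=b$; then $X_\epsilon:=(1-\epsilon)X^*+\epsilon\hat X$ lies in ${\rm int}\,\CP^n$, satisfies $\cA(X_\epsilon)=b$, and has ${\rm tr}(CX_\epsilon)=(1-\epsilon)v_p+\epsilon\,{\rm tr}(C\hat X)\to v_p$ as $\epsilon\downarrow 0$. The crux is then to approximate $X_\epsilon$ by points of $\Diag_+^n(U^k)$ that satisfy $\cA(X)=b$ \emph{exactly}. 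This exact feasibility is the main obstacle: the naive approximant $\tilde X^k\to X_\epsilon$ only gives $\cA(\tilde X^k)\to b$, and the obvious correction using surjectivity of $\cA$ need not remain inside the cone $\Diag_+^n(U^k)$. To resolve this I would exploit that $X_\epsilon$ is interior: choosing (via {\bf A}2) matrices $P_1,\dots,P_m$ with $\cA(P_j)=e_j$ and a small $r'>0$, all of $X_\epsilon$ and $X_\epsilon\pm r'P_j$ lie in $\CP^n$, so by the approximation tool each has an approximant $Z_0^k,Z_j^{\pm,k}\in\Diag_+^n(U^k)$. Since $\Diag_+^n(U^k)$ is a convex cone, any convex combination of these stays inside it, and because $\cA(Z_0^k)\to b$ while $\cA(Z_j^{\pm,k})\to b\pm r'e_j$ surround $b$, for large $k$ one can solve a small, well-conditioned system for convex weights---with the weight on $Z_0^k$ tending to $1$---producing $X^k\in\Diag_+^n(U^k)$ with $\cA(X^k)=b$ and $X^k\to X_\epsilon$. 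Hence $\limsup_k\tilde v_p(U^k)\le{\rm tr}(CX_\epsilon)$, and letting $\epsilon\downarrow 0$ gives $\limsup_k\tilde v_p(U^k)\le v_p$. Together with the lower bound, this yields finiteness for large $k$ and $\tilde v_p(U^k)\to v_p$.

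For statement (ii), nonemptiness of the solution set follows because $\Diag_+^n(U^k)$ is a finitely generated (hence closed, polyhedral) cone, so \eqref{Diag0} is a feasible, bounded-below linear program in the generating coefficients and thus attains its optimum. For the uniform bound I would use the dual Slater point from {\bf A}3: take $\bar y$ with $S:=C-\sum_i\bar y_iA_i\in{\rm int}\,\CoP^n$. Minimizing the continuous map $X\mapsto{\rm tr}(SX)$ over the compact set $\{X\in\CP^n:\|X\|=1\}$ gives a constant $c_0>0$ with ${\rm tr}(SX)\ge c_0\|X\|$ for all $X\in\CP^n$. For any feasible $X$ of \eqref{Diag0} we have $X\in\CP^n$ and ${\rm tr}(SX)={\rm tr}(CX)-\bar y^Tb$, so $\|X\|\le c_0^{-1}({\rm tr}(CX)-\bar y^Tb)$. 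Applying this to optimal $X^k$, whose objective equals $\tilde v_p(U^k)\to v_p$ and is therefore bounded, yields a uniform bound on the solution set for all large $k$.

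Finally, for statement (iii), boundedness from (ii) guarantees accumulation points exist. If $X^{k_j}\to\bar X$, then $\bar X\in\CP^n$ since each $X^{k_j}\in\Diag_+^n(U^{k_j})\subseteq\CP^n$ and $\CP^n$ is closed, and $\cA(\bar X)=\lim_j\cA(X^{k_j})=b$ by continuity, so $\bar X$ is feasible for \eqref{P0}. Moreover ${\rm tr}(C\bar X)=\lim_j\tilde v_p(U^{k_j})=v_p$ by (i), so $\bar X$ attains the optimal value of \eqref{P0} and is a solution. The delicate point throughout remains the exact-feasibility correction in (i), which is precisely where strict feasibility of \eqref{P0} and surjectivity of $\cA$ are essential.
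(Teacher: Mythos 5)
Your proof is correct, and for part (i) it takes a genuinely different route from the paper. The paper establishes Painlev\'{e}--Kuratowski convergence of the cones $\Diag_+^n(U^k)$ to $\CP^n$ and then invokes the variational-analysis machinery of Rockafellar--Wets (convergence of the constraint sets under a non-separation condition, then epi-convergence of the objectives) to obtain $\tilde v_p(U^k)\to v_p$ and the convergence of minimizers all at once. You instead construct explicit feasible points of \eqref{Diag0}: perturb an optimal $X^*$ toward the primal Slater point to land in ${\rm int}\,\CP^n$, round the atoms of the rank-one decomposition of $X_\epsilon$ to nearby rows of $U^k$, and restore exact feasibility of $\cA(X)=b$ by a convex-combination correction built from approximants of $X_\epsilon\pm r'P_j$. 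That correction does work: the vectors $\{\pm r'e_j\}$ positively span $\R^m$ with $b$ interior to their convex hull, and positive spanning with uniform coefficient bounds is stable under the small perturbations $\cA(Z_j^{\pm,k})-(b\pm r'e_j)\to 0$, so the correction weights tend to $0$ and $X^k\to X_\epsilon$; this is the one step you would need to spell out in a written-up version. Your approach is more elementary and self-contained, and it also avoids the paper's appeal to monotonicity of the sets $\{\Diag_+^n(U^k)\}$ (used there both in the epi-convergence step and in asserting that $\{\tilde v_p(U^k)\}$ is nonincreasing), which is not among the stated hypotheses of the theorem; you get uniform boundedness directly from $\tilde v_p(U^k)\to v_p$. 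Parts (ii) and (iii) essentially coincide with the paper's argument: the coercivity bound ${\rm tr}(\bar Y X)\ge \epsilon\|X\|_F$ on $\CP^n$ furnished by the dual Slater point, and the standard closedness/continuity argument identifying accumulation points as optimal solutions of \eqref{P0}.
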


\begin{proof}
  Note that the ${\Diag}_+^n(U^k)$ defined in \eqref{DiagplusU} is the conic hull of $u^k_i{u^k_i}^T$, where $u^k_i$ are rows of $U^k$. Note also that any element $X$ in $\CP^n$ can be written as the conic combination of $\frac{n(n+1)}2$ matrices $vv^T$, with $v\in \Delta^n$. Thus, in view of \eqref{limit}, $X$ can then be written as the limit of a sequence $\{X^k\}$, where $X^k\in {\Diag}_+^n(U^k)$ for each $k$. This together with ${\Diag}_+^n(U^k)\subseteq \CP^n$ shows that the sequence of sets $\{{\Diag}_+^n(U^k)\}$ converges to $\CP^n$ in the sense of Painlev\'{e}-Kuratowski \cite[Chapter~4B]{RockWets98}.

  Since the mapping $X\mapsto {\cal A}(X) := ({\rm tr}(A_1X),\ldots,{\rm tr}(A_mX))$ is surjective by Assumption~{\bf A}2 and \eqref{P0} is strictly feasible, the vector $b$ and the set ${\cal A}(\CP^n)$ cannot be separated in the sense of \cite[Theorem~2.39]{RockWets98}. Thus, \cite[Theorem~4.32]{RockWets98} shows that the sequence of feasible sets of \eqref{Diag0} converges to the feasible set of \eqref{P0} in the sense of Painlev\'{e}-Kuratowski.

  It now follows from \cite[Theorem~4.10(a)]{RockWets98} and the nonemptiness of the feasible set of \eqref{P0} that the feasible sets of \eqref{Diag0} are nonempty for all sufficiently large $k$. Hence $\tilde v_p(U^k)<\infty$ for all sufficiently large $k$. Note that for each $k$, the dual problem to \eqref{Diag0} is dual strictly feasible because of Assumption~{\bf A}3 and $\CoP^n\subseteq (\Diag_+^n(U^k))^*$. Thus, $\tilde v_p(U^k)$ is indeed finite for all sufficiently large $k$. Moreover, thanks to the dual strict feasibility, the solution sets of \eqref{Diag0} are nonempty whenever $\tilde v_p(U^k)$ is finite hence, in particular, are nonempty for all sufficiently large $k$.

  Next, note that by Assumption~{\bf A}3 the dual problems of \eqref{Diag0} for each $k$ actually have a {\em common} Slater point, i.e., there exists a matrix
  \[
  \bar Y := C - \sum_{i=1}^m\bar y _i A_i \in {\rm int}\, \CoP^n \subseteq {\rm int}\,(\Diag_+^n(U^k))^*.
  \]
  Therefore, there exists $\epsilon>0$ so that $\bar Y + \epsilon {\bf B}\subseteq {\rm int}\, \CoP^n$, where ${\bf B}$ is the unit closed ball centered at the origin (in Fr\"{o}benius norm). Consequently, for any $X\in \CP^n$, it holds that ${\rm tr}(\bar Y X)\ge \epsilon \|X\|_F$. We now argue that the solution sets of \eqref{Diag0} are uniformly bounded for all $k$. Indeed, fix any $k$ so that the solution set of \eqref{Diag0} is nonempty, and let $X^k$ be a solution. Then $X^k$ is a Lagrange multiplier for the dual problem. In particular,
  \[
  \tilde v_p(U^k) = \max_y\left\{ b^Ty + {\rm tr}\left(X^k\left[C - \sum_{i=1}^m y_i A_i\right]\right)\right\}\ge b^T\bar y + {\rm tr}(X^k \bar Y)\ge b^T\bar y + \epsilon\|X^k\|_F,
  \]
  where the last inequality holds because $X^k \in \Diag_+^n(U^k)\subseteq \CP^n$. Since $\{\tilde v_p(U^k)\}$ is nonincreasing, we conclude from the above inequality that $\{X^k\}$ can be bounded above by a constant independent of $k$. Thus, the solution sets of \eqref{Diag0} are uniformly bounded for all $k$.

  Finally, since the sequence of sets $\{\Diag_+^n(U^k)\}$ is monotonically increasing, we see from \cite[Proposition~7.4(c)]{RockWets98} that the objective function (with the constraint considered as the indicator function) of \eqref{Diag0} epi-converges to that of \eqref{P0} in the sense of \cite[Definition~7.1]{RockWets98}. The desired conclusion concerning limits of $\{\tilde v_p(U^k)\}$ and $\{X^k\}$ now follows from \cite[Theorem~7.31(b)]{RockWets98}.
\end{proof}

Since ${\Diag}_+^n(U)\subseteq\SDD^G_+(U)$ if the edges of $G$ cover all nodes, we get the convergence of the sequence of problems \eqref{SDDP0} for a positively enlarging sequence $\{(G^k,U^k)\}$ under the same assumptions on $U^k$. But we can actually \revise{obtain the desired convergence result under a weaker condition}.

\begin{theorem}\label{thm3}
Assume that \eqref{P0} is strictly feasible.
Let $\{(G^k,U^k)\}$ be a positively enlarging sequence such that
\begin{equation}\label{pt_cond}
\lim_{k\rightarrow\infty} \max_{x \in \Delta^n} \min_{y \in \revise{{\rm seg}(G^k,U^k)}} \|x-y\| = 0.
\end{equation}
Then it holds that:
\begin{enumerate}[{\rm (i)}]
  \item $v_p(G^k,U^k)$ is finite for all sufficiently large $k$ and $\lim_{k\to\infty}v_p(G^k,U^k) = v_p$.
  \item The solution set of \eqref{SDDP0} with $(G,U) = (G^k,U^k)$ is nonempty and uniformly bounded for all sufficiently large $k$.
  \item Let $X^k$ be a solution of \eqref{SDDP0} with $(G,U) = (G^k,U^k)$ whenever the solution set is nonempty. Then any accumulation point of $\{X^k\}$ is a solution of \eqref{P0}.
\end{enumerate}
\end{theorem}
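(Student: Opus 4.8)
The plan is to follow the architecture of the proof of Theorem~\ref{convergdiag} essentially line by line: everything reduces to the single statement that the cones $\{\SDD^{G^k}_+(U^k)\}$ converge to $\CP^n$ in the sense of Painlev\'{e}--Kuratowski, and once that is in place the duality and variational-convergence arguments carry over verbatim with $\SDD^{G^k}_+(U^k)$ in the role previously played by $\Diag_+^n(U^k)$. The whole novelty --- and the reason the weaker hypothesis \eqref{pt_cond} (approximation by the segment set rather than by the vertices as in \eqref{limit}) suffices --- lives entirely in that convergence step, which is where I would concentrate the work.

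First I would establish the set convergence. The outer inclusion is immediate: by Theorem~\ref{prop:properties}(iv) each $\SDD^{G^k}_+(U^k)\subseteq\CP^n$, and $\CP^n$ is closed, so any accumulation point of a selection $Z^k\in\SDD^{G^k}_+(U^k)$ lies in $\CP^n$. For the inner inclusion, fix $X\in\CP^n$ and write it, exactly as in the proof of Theorem~\ref{convergdiag}, as a conic combination $X=\sum_{l=1}^{L}c_l\,v_lv_l^T$ with $L=\tfrac{n(n+1)}2$, $c_l\ge 0$, and $v_l\in\Delta^n$. By \eqref{pt_cond} I can choose, for each $l$ and each $k$, a point $y_l^k\in{\rm seg}(G^k,U^k)$ with $\|v_l-y_l^k\|\to 0$ as $k\to\infty$. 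Setting $X^k:=\sum_{l=1}^{L}c_l\,y_l^k(y_l^k)^T$ gives $X^k\to X$, while Theorem~\ref{prop:sddGUconehull} guarantees $X^k\in\SDD^{G^k}_+(U^k)$ precisely because each $y_l^k$ lies in ${\rm seg}(G^k,U^k)$. Hence $\CP^n$ is contained in the inner limit, and the Painlev\'{e}--Kuratowski limit equals $\CP^n$.

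With this convergence in hand, the remaining steps transcribe the proof of Theorem~\ref{convergdiag}. Strict feasibility of \eqref{P0} together with surjectivity of $\cA$ (Assumption~{\bf A}2) prevents separation of $b$ from $\cA(\CP^n)$, so \cite[Theorem~4.32]{RockWets98} yields convergence of the feasible sets of \eqref{SDDP0} to that of \eqref{P0}, and \cite[Theorem~4.10(a)]{RockWets98} then gives their nonemptiness, hence $v_p(G^k,U^k)<\infty$, for all large $k$. Assumption~{\bf A}3 supplies a common dual Slater point $\bar Y=C-\sum_{i=1}^m\bar y_iA_i\in{\rm int}\,\CoP^n\subseteq{\rm int}\,(\SDD^{G^k}_+(U^k))^*$, using $\CoP^n\subseteq(\SDD^{G^k}_+(U^k))^*$; this forces $v_p(G^k,U^k)$ finite with nonempty solution set and, through the bound ${\rm tr}(\bar Y X)\ge\epsilon\|X\|_F$ valid on $\CP^n$, delivers the uniform boundedness in (ii). Finally, monotonicity of $\{\SDD^{G^k}_+(U^k)\}$ --- which follows from ${\rm seg}(G^k,U^k)\subseteq{\rm seg}(G^{k+1},U^{k+1})$ via Theorem~\ref{prop:sddGUconehull} --- gives epi-convergence of the objectives by \cite[Proposition~7.4(c)]{RockWets98}, and \cite[Theorem~7.31(b)]{RockWets98} yields both the limit of $\{v_p(G^k,U^k)\}$ in (i) and the statement about accumulation points of $\{X^k\}$ in (iii).

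I expect the only genuine obstacle to be the inner-limit step, and specifically the realization that a generic $X\in\CP^n$ must be decomposed into rank-one generators which are then perturbed onto \emph{nearby points of the segments} rather than onto vertices; the conic-hull description in Theorem~\ref{prop:sddGUconehull} is exactly what makes those perturbed generators admissible members of $\SDD^{G^k}_+(U^k)$, and it is this that lets \eqref{pt_cond} replace the stronger vertex condition \eqref{limit}. All downstream estimates are unchanged from Theorem~\ref{convergdiag}, requiring no new ideas beyond the substitution of cones and the appeal to Theorem~\ref{prop:properties} for closedness and the dual-cone formula.
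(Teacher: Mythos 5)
Your proposal is correct, and its overall architecture coincides with the paper's: everything is reduced to Painlev\'{e}--Kuratowski convergence of the cones $\SDD^{G^k}_+(U^k)$ to $\CP^n$, after which the duality, boundedness and epi-convergence arguments of Theorem~\ref{convergdiag} are transcribed verbatim. The one place where you genuinely diverge is the set-convergence step itself. You prove it directly: decompose $X\in\CP^n$ into finitely many rank-one generators $v_lv_l^T$ with $v_l\in\Delta^n$, use \eqref{pt_cond} to pick $y_l^k\in{\rm seg}(G^k,U^k)$ with $y_l^k\to v_l$ (the minimum is attained since ${\rm seg}(G^k,U^k)$ is a finite union of closed segments), and invoke Theorem~\ref{prop:sddGUconehull} to place $\sum_l c_l y_l^k(y_l^k)^T$ inside $\SDD^{G^k}_+(U^k)$. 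The paper instead reduces to the already-proved Theorem~\ref{convergdiag}: it subdivides every segment of $(G^k,U^k)$ into pieces of length at most $1/k$, collects the subdivision points into an augmented matrix $\tilde U^k$ satisfying the vertex condition \eqref{limit}, concludes that $\Diag^n_+(\tilde U^k)\to\CP^n$, and then sandwiches $\Diag^n_+(\tilde U^k)\subseteq\SDD^{G^k}_+(U^k)\subseteq\CP^n$, finishing via \cite[Exercise~4.3(c)]{RockWets98}. Your direct construction is slightly more self-contained and makes transparent exactly where the weaker hypothesis \eqref{pt_cond} enters; the paper's reduction buys economy by recycling the statement of Theorem~\ref{convergdiag} rather than its internal construction, at the cost of an auxiliary discretization and an appeal to a sandwiching result. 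Both are valid, and the downstream portions of the two proofs are identical.
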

\begin{proof}
Note that from Theorem~\ref{prop:sddGUconehull} and the description of $\Diag^n_+(U)$ as the conic hull of all matrices $u_iu_i^T$ where $u_i$ is a row of $U$, \revise{if every node of $G$ is covered by some edges and }if we construct $U'$ by adding rows such that each new row lies in $[u_i,u_j]$ for some $\{i,j\} \in {\cal E}$, we have $\Diag^n_+(U') \subseteq \SDD_+^G(U)$.

For each $U^k$, subdivide each segment $[u^k_i,u^k_j]$ into segments no longer than $1/k$, and \revise{add} these new points to $U^k$ to form $\tilde U^k\in \R_+^{\tilde t_k\times n}$. Then for each $x\in \Delta^n$, we have
\[
\min_{i=1,\ldots,\tilde t_k}\|x-\tilde u^k_i\| \le \min_{y \in {\rm seg}(G^k,U^k)} \|x-y\| + \frac1k,
\]
where $\tilde u_i^k$ is the $i$-th row of $\tilde U^k$. Thus, the sequence $\{\tilde U^k\}$ satisfies the conditions of Theorem~\ref{convergdiag}. Consequently, from the proof of Theorem~\ref{convergdiag}, the sequence of sets $\{{\Diag}_+^n(\tilde U^k)\}$ converges to $\CP^n$ in the sense of Painlev\'{e}-Kuratowski. In view of this and \cite[Exercise~4.3(c)]{RockWets98}, $\{\SDD^n_+(U^k)\}$ converges to $\CP^n$. The rest of the proof follows exactly the same arguments as in the proof of Theorem~\ref{convergdiag}.
\end{proof}

An obvious way of guaranteeing the satisfaction of the condition \eqref{pt_cond} in Theorem~\ref{thm3} is to consider the rows of $U^k$ to be the set of points in $x \in \Delta^n$ such that $kx \in {\Z}^n$, i.e. an equally spaced distribution of points in the simplex, with a growing number of points. This is in fact the strategy explored in \cite{Yil12} with the linear programming approach. As guaranteed by Theorem~\ref{thm3}, this is sufficient to get convergence in our case, independently of the edges considered, but we can get away with much less. Indeed, it is easy to see, for example, that we do not need to map vertices to the interior of the simplex to get convergence and, in fact, it is enough to uniformly sample the {\em boundary} of the simplex, \revise{ and form a graph with all possible edges between the chosen vertices.}
 Finding embedded graphs that optimally cover $\Delta^n$ in the sense of minimizing the maximum distance to a point of the simplex seems to be a hard problem with no obvious answer, but many different strategies can be attempted. For practical purposes, it might be helpful to use the problem structure to design strategies for constructing $\{(G^k,U^k)\}$; these may not satisfy condition \eqref{pt_cond} and hence the convergence behavior can be compromised, but their corresponding problem \eqref{SDDP0} may be easier to solve. Indeed, as discussed in \cite[Section~1.4]{LoboVandenBoydLe98}, the amount of work per iteration for solving \eqref{SDDP0dual} is ${\cal O}((m + t_k^2)^2(4|{\cal E}|+t_k^2))$ when $(G,U) = (G^k,U^k)$. Hence, we will explore some problem-dependent inner approximation schemes in the next section.

Before ending this section, we would like to point out that the approach in \cite{Yil12} using $\Diag_+^n(U)$ for (rows of) $U$ equally distributed in the simplex is one of the few problem-independent inner approximations to $\CP^n$ presented in the literature. The only other approach is that of \cite{Lasserre14}, which leads to SDP problems. Although conceptually very interesting and with guaranteed convergence, this latter approach performs poorly in practice, because the size of the constraints grows very fast and the small instances that can be reasonably computed give weak approximations. In some sense, our SOCP based approximation schemes may lend some of the power of semidefinite programming to the LP approximation without completely sacrificing computability.

\section{Problem-dependent inner approximation schemes}\label{sec4}

In this section, we propose some problem-dependent heuristic schemes for constructing $\{(G^k,U^k)\}$. They typically lead to computationally \revise{more tractable} problems than a positively enlarging sequence satisfying \eqref{pt_cond}. As we shall see later in our numerical experiments, these problem-dependent schemes in general return solutions with reasonable quality, though their convergence behaviors are still unknown.
\revise{A related problem-dependent approach was developed in \cite{ahmadi2017optimization} for semidefinite programming. In there, they proposed the use of the cone $\SDD^n(U)$ and progressively enlarge the $U$ to obtain efficient inner approximations to $\cS^n_+$. We propose in this section a related approach. The main difference is that in the semidefinite case considered in \cite{ahmadi2017optimization}, enlarging the $U$ is relatively simple, as we can always separate the dual solution to the inner approximation from $\cS^n_+$, if it is not there. In the case of completely positive cone, however, there is no realistic way of even checking if the dual solution is copositive. Thus, a direct separation procedure, like the one proposed in \cite{ahmadi2017optimization}, is not viable. }

\subsection{Problem-dependent positively enlarging sequence}\label{sec4.1}

In this section, we describe a problem-dependent strategy for constructing a positively enlarging sequence $\{(G^k,U^k)\}$ that can potentially perform better on specific problem instances.

After solving \eqref{SDDP0} with a choice of $(G^k,U^k)$, if the problem is feasible, one will obtain a solution $X \in \SDD_+^G(U)$. By Theorem~\ref{prop:sddGUconehull}, this $X$ can be written as a conic combination of $vv^T$ for $v \in {\rm seg}(G,U)$. Our plan here is to add these $v$ as vertices to $G$ and add some new edges from them, in order to increment the graph. The decomposition is not unique, so one has to carefully define what is meant by it.

First, note that for an $M \in \cS^2_+\cap \N^2$, there exist $a \ge 0$, $b \ge 0$ and $v\in \R^2_+$ so that
  \begin{equation}\label{Mdecomp}
  M = vv^T + \begin{bmatrix}
    a& 0\\0& b
  \end{bmatrix}.
  \end{equation}
\revise{This is trivially true if any element in the diagonal of $M$ is zero. For other matrices, the above decomposition can be realized} by taking for example $v=(\sqrt{m_{11}},m_{12}/\sqrt{m_{11}})$, implying $a=0$ and $b=m_{22}-m_{12}^2/m_{11}$, which is greater than or equal to zero since $M \succeq 0$.

Now, for any $U\in \R^{t\times n}_+$, one can see that $U^T \iota_{ij}(M) U = au_iu_i^T+bu_ju_j^T+(v_1u_i +v_2u_j)(v_1u_i +v_2u_j)^T$, where $u_i$ is the $i$th row of $U$, $1\le i<j\le t$. So, besides the vertices $u_i$ and $u_j$, we need at most one point coming from each edge $[u_i,u_j]$
to describe $U^T \iota_{ij}(M) U$. Since elements of $\SDD_+^G(U)$ are sums of matrices of this type for $\{i,j\}\in {\cal E}$ by Theorem~\ref{prop:sddGUconehull}, we have the following Lemma refining Theorem~\ref{prop:sddGUconehull}.

 \begin{lemma}\label{lem:decomposition}
  Any element $X\in \SDD_+^G(U)$ can be written as
  \[
  X=\sum_{i=1}^t \lambda_i u_i u_i^T + \sum_{\{i,j\} \in {\cal E}} \gamma_{ij} w_{ij} w_{ij}^T
  \]
  where $u_i$ is the $i$-th row of $U\in \R^{t\times n}_+$, $w_{ij} \in [u_i,u_j]$ and $\lambda_i, \gamma_{ij} \geq 0$. \rerevise{Indeed, for the first sum, it suffices to sum over the $i$'s that are covered by some edges.}
 \end{lemma}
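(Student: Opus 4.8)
The plan is to build the decomposition constructively from the generating structure of $\SDD^G_+(U)$ already established in Theorem~\ref{prop:sddGUconehull} and Theorem~\ref{prop:properties}(ii). By Theorem~\ref{prop:properties}(ii) we know $\SDD^G_+ = \sum_{\{i,j\}\in{\cal E}} \iota_{ij}(\cS^2_+\cap\N^2)$, so any $X\in\SDD^G_+(U)$ can be written as $X = U^T Y U$ where $Y = \sum_{\{i,j\}\in{\cal E}} \iota_{ij}(M_{ij})$ for some matrices $M_{ij}\in\cS^2_+\cap\N^2$. Applying $U^T(\cdot)U$ termwise gives $X = \sum_{\{i,j\}\in{\cal E}} U^T\iota_{ij}(M_{ij})U$, which reduces the whole problem to analyzing a single term $U^T\iota_{ij}(M)U$.

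First I would invoke the elementary decomposition \eqref{Mdecomp}: each $M_{ij}\in\cS^2_+\cap\N^2$ can be written as $v v^T + \Diag(a,b)$ with $a,b\ge 0$ and $v\in\R^2_+$. Substituting this into the formula $U^T\iota_{ij}(M)U = a\,u_iu_i^T + b\,u_ju_j^T + (v_1 u_i + v_2 u_j)(v_1 u_i + v_2 u_j)^T$ derived in the text just before the lemma, I obtain for each edge $\{i,j\}$ three rank-one nonnegative terms: a multiple of $u_iu_i^T$, a multiple of $u_ju_j^T$, and a multiple of $w_{ij}w_{ij}^T$ where $w_{ij} := \frac{v_1 u_i + v_2 u_j}{v_1+v_2}$ lies in the segment $[u_i,u_j]$ (here $v_1+v_2>0$ unless the cross term vanishes, in which case that term is simply dropped). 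Summing over all edges $\{i,j\}\in{\cal E}$ and collecting the coefficients of each $u_iu_i^T$ into a single nonnegative scalar $\lambda_i$ yields exactly the claimed form $X = \sum_i \lambda_i u_iu_i^T + \sum_{\{i,j\}\in{\cal E}} \gamma_{ij}w_{ij}w_{ij}^T$.

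For the final sentence of the statement, I would observe that the diagonal contributions $\lambda_i u_iu_i^T$ arise solely from the $a,b$ terms of the edge decompositions, and an index $i$ receives such a contribution only through edges incident to it. Hence if vertex $i$ is covered by no edge of ${\cal E}$, then $\lambda_i = 0$ automatically and that term can be omitted, so it indeed suffices to sum over the $i$'s that are covered by some edge.

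I do not anticipate a serious obstacle here, since the lemma is essentially a careful bookkeeping refinement of results already proved; the only mild care needed is in the degenerate cases where $v_1+v_2=0$ (so the segment point $w_{ij}$ is not well defined and that summand is absent) and in verifying that the normalization of $w_{ij}$ is absorbed into $\gamma_{ij}$ without affecting nonnegativity. The genuinely load-bearing facts—that $\cS^2_+\cap\N^2$ matrices decompose as in \eqref{Mdecomp} and that $\iota_{ij}$ interacts with rank-one matrices as stated—are both supplied by the surrounding text, so the proof is a short assembly rather than a new argument.
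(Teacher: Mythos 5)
Your proposal is correct and follows essentially the same route as the paper: the paper likewise obtains the lemma by writing $X$ as a sum of terms $U^T\iota_{ij}(M_{ij})U$ over edges (via the characterization of $\SDD^G_+$ as $\sum_{\{i,j\}\in{\cal E}}\iota_{ij}(\cS^2_+\cap\N^2)$), applying the decomposition \eqref{Mdecomp} to each $M_{ij}$, and using the identity $U^T\iota_{ij}(M)U = a\,u_iu_i^T + b\,u_ju_j^T + (v_1u_i+v_2u_j)(v_1u_i+v_2u_j)^T$. Your extra care with the normalization of $w_{ij}$ into the segment and with the degenerate case $v=0$ is sound and only makes explicit what the paper leaves implicit.
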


 A natural question to ask is which points we can pick in each segment. To answer this question, we assume without loss of generality that $m_{12} > 0$ (and hence $m_{11} > 0$ and $m_{22} > 0$) in \eqref{Mdecomp} and demonstrate how the $v$ there can be chosen. Note that $U^Tvv^TU$ is supposed to correspond to a $\gamma_{ij}w_{ij}w_{ij}^T$ in the decomposition in Lemma~\ref{lem:decomposition}.

 Since $m_{12} > 0$, we must have $v_1 > 0$ and $v_2 > 0$. Then we just need to see what the ratio $r=v_1/v_2$ can be. What we saw above right after \eqref{Mdecomp} was the largest case, where we get $r=m_{11}/m_{12}$. The smallest it can get is attained by setting $v=(m_{12}/\sqrt{m_{22}},\sqrt{m_{22}})$, which gives us $r=m_{12}/m_{22}$. \revise{These two values for $r$ can be seen by noting that any extremal ratio $v_1/v_2$ for the $v$ in \eqref{Mdecomp} must correspond to $a = 0$ or $b = 0$.} 
 A balanced option, defined in a way that  the ratio between diagonal entries of $v v^T$ preserves the ratio between the diagonal entries of $M$, is to take
 \begin{equation}\label{balanced_op}
   v=\sqrt{m_{12}}\begin{bmatrix}
     \left(\frac{m_{11}}{m_{22}}\right)^{\frac14}\\
     \left(\frac{m_{22}}{m_{11}}\right)^{\frac14}
   \end{bmatrix},
 \end{equation}
 which corresponds to $r=\sqrt{m_{11}/m_{22}}$, the geometric mean of the largest and smallest possible ratios.

Based on these observations, we can now describe a general strategy for an iterative procedure to obtain upper bounds for  \eqref{P0}.

\fbox{\parbox{5.7 in}{
\begin{description}
\item {\bf Scheme 1: Successive upper bound scheme for \eqref{P0}}

\item[Step 0.] Start with a {\em complete} graph $G^0$ and its embedding $(G^0,I)$ in $\Delta^n$. Set $k=0$ and $U^0 = I$.

\item[Step 1.] For an optimal solution $X^k$ of \eqref{SDDP0} with $(G,U) =(G^k,U^k)$, apply Lemma~\ref{lem:decomposition} to obtain points $w_{ij}$ for some $\{i,j\} \in {\cal E}' \subseteq {\cal E}$
such that \rerevise{$X = X^k$} is a conic combination of $w_{ij} w_{ij}^T$ for $\{i,j\} \in {\cal E}'$ and $u_i u_i^T$ for the vertex $i$ of $G$.

\item[Step 2.] Define a new graph embedding $(G^{k+1},U^{k+1})$ by adding new vertices at the points $w_{ij}$ (or at least some subset of them) and some new edges connecting those vertices to some of the previously defined ones, and possibly remove redundant edges and go to Step 1.
\end{description}
}}
\ \\

The general idea is therefore to, \revise{augment the graph at each step by adding} some vertices in the edges that were active in the optimal solution and some edges incident with them. All the steps have, however, some subtleties that need to be addressed.

The initial embedding  $(G^0,U^0)$ is currently taken to be simply the embedding of $K^n$ into the vertices of $\Delta^n$, so that $\SDD^{G^0}_+(U^0)=\SDD^n_+$. If that is infeasible, however, the strategy does not work.
Nevertheless, assuming strict feasibility of \eqref{P0}, we know from Theorem \ref{convergdiag} that there is some small enough uniform simplicial partition of $\Delta^n$ that will make the problem feasible.

The decomposition obtained in Step 1 is not unique. There are two sources of variations. First, as discussed above, given a \revise{$2 \times 2$} semidefinite matrix $M$ such that $\iota_{ij}(M)$ \revise{appears} in the decomposition of $X$, we have some leeway on which point to pick in the edge $[u_i,u_j]$. Second, notice that even these matrices $M$ are not uniquely defined. Since the matrices $M$ will be a side result of the solution to \eqref{SDDP0}, the choice of algorithm and the way the problem is encoded will have some impact in the decomposition. As for defining the $v$ given the matrix $M$, we will use the balanced approach described above in \eqref{balanced_op} as it seems to perform well in practice.

The augmenting step (Step 2) is the most delicate of all. Different augmenting techniques will give rise to very different procedures. Here and in our numerical experiments, we consider two different approaches. We will present more implementation details in Section~\ref{sec:num}.

\paragraph{The maximalist approach:} In this approach, we add some new vertices and then connect all vertices to form a complete graph. This is \revise{memory consuming} and induces some redundancies: every node we add is in the middle of an already existing edge. Adding edges to those does not enlarge the cone $\SDD^G_+(U)$ and might lead to numerical inaccuracies, as we create multiple ways of writing points in a segment. Some pruning techniques could be applied.


\paragraph{The adaptive simplicial partition approach:} This is mimicking the technique introduced in \cite{BunDur09}, which maintains the set of edges as that of a simplicial partition. At every step we would pick edges to subdivide and subdivide all the simplices containing that edge. The choice of nodes and edges to add to $G^k$ in our approach is based on the solution we obtain from solving \eqref{SDDP0} for $(G,U) = (G^{k-1},U^{k-1})$. This is different from \cite{BunDur09}, which relies solely on an outer approximation to guide the subdivision process.\\ 

Note that we do not have any guarantee of convergence \revise{for Scheme 1}. However, geometrically one can see what must happen in order for the method to get stuck, i.e., for $\SDD^{G^k}_+(U^k)=\SDD^{G^{k+1}}_+(U^{k+1})$. As an immediate consequence of Theorem~\ref{prop:sddGUconehull}, this happens if and only if all the newly added edges in the embedding are contained in previously existing edges. \revise{This is because rank one nonnegative matrices are on the extreme rays of $\CP^n$(see \cite{abraham2003completely}). Thus, we see from Theorem~\ref{prop:sddGUconehull} that $\SDD^{G^k}_+(U^k)=\SDD^{G^{k+1}}_+(U^{k+1})$ if and only if ${\rm seg}(G^k,U^k) = {\rm seg}(G^{k+1},U^{k+1})$.} This is an extremely strong condition, that implies
essentially (depending on the scheme chosen to enlarge the graph) that the scheme gets stuck if for some iteration the optimal solution can be attained as a combination of only the nodes, and no elements from the edges. Or, in other words, the problem \eqref{SDDP0} has the same solution if we replace $\SDD^{G^k}_+(U^k)$ by \revise{$\Diag^n_+(U^k)$. On passing, we would like to point out that, in occasions where convergence is a serious concern, one can modify Step 2 of Scheme 1 by adding a random vertex in $\Delta^n$ in addition to those $w_{ij}$: this resulting scheme is guaranteed to converge in view of Theorem~\ref{thm3} if \eqref{P0} is also strictly feasible.}

\subsection{A \revise{forgetfulness scheme}}\label{sec4.2}

The use of a positively enlarging sequence $\{(G^k,U^k)\}$ can lead to large-scale SOCP problems when $k$ is huge. As a heuristic to alleviate the computational complexity, we propose a simple \revise{forgetfulness scheme}.

In this approach, we maintain the complete graph throughout. However, we always form $U^k$ by appending only the newly generated vertices to $U^0$, which we choose to be the identity matrix. The details are described below.\\

\fbox{\parbox{5.7 in}{
\begin{description}
\item {\bf Scheme 2: A \revise{forgetfulness} upper bound scheme for \eqref{P0}}

\item[Step 0.] Start with a {\em complete} graph $G^0$ and its embedding $(G^0,I)$ in $\Delta^n$. Set $k=0$ and $U^0 = I$.

\item[Step 1.] For an optimal solution $X^k$ of \eqref{SDDP0} with $(G,U) =(G^k,U^k)$, apply Lemma~\ref{lem:decomposition} to obtain points $w_{ij}$ for some $\{i,j\} \in {\cal E}' \subseteq {\cal E}$
such that \rerevise{$X = X^k$} is a conic combination of $w_{ij} w_{ij}^T$ for $\{i,j\} \in {\cal E}'$ and $u_i u_i^T$ for the vertex $i$ of $G$.

\item[Step 2.] Define a new graph embedding $(G^{k+1},U^{k+1})$: starting with $(G^0,I)$, add new vertices at the points $w_{ij}$ and then add edges between each new vertex and all vertices in $G^0$. Go to Step 1.
\end{description}
}}\ \\

Note that, in general, one cannot guarantee that the \revise{forgetfulness scheme} is even \rerevise{monotone}, as we are dropping the factors $u_i u_i^T$ that were a part of the representation of the optimal solution $X$ in Step 1. However,
in most studied random instances in our numerical experiments, the \revise{forgetfulness scheme} appears to be \rerevise{monotone}. The main reason could be that the algorithm tends to write $X$ as a conic combination of just the matrices $w_{ij} w_{ij}^T$ for $\{i,j\} \in {\cal E}'$. When this happens, we are guaranteed that the next iteration will be non-increasing, but this need not always be the case.


%


\section{Numerical simulations}\label{sec:num}

In this section, \revise{we report on numerical experiments} to test our proposed approaches. All experiments were performed in Matlab
(R2017a) on a 64-bit PC with an Intel(R) Core(TM) i7-6700 CPU (3.40GHz) and 16GB RAM.
We used the convex optimization software CVX \cite{cvx} (version 2.1), running the solver MOSEK (version 8.0.0.60) to solve the conic optimization problems that arise.

In our tests, we specifically consider the following strategies:

\paragraph{$\Delta$-partition:} In this approach, \revise{controlled by a parameter $k \geq 2$}, we generate the vertices of the graph $G^k$ as the $\binom{n+k-1}{k}$ vertices in the uniform subdivision of the simplex $\Delta^{n}$ into simplices of size $\frac1k\Delta^n$. We then add edges between two vertices whenever their supports differ by $2$.

Note that by Theorem~\ref{thm3}, if \eqref{P0} is in addition strictly feasible, then $v_p(G^k,U^k)$ will be close to $v_p$ for all sufficiently large $k$, \revise{so this strategy is guaranteed to converge as $k$ increases}.

\paragraph{Max:} This is a \revise{variant} of Scheme 1. Specifically, in Step 1, we decompose $X^k$ as described in Lemma~\ref{lem:decomposition} using the balanced option given in \eqref{balanced_op}. Then, in Step 2, \rerevise{we add to $G^k$ as new vertices all $w_{ij}$ whose corresponding entry $X^k_{ij}$} is sufficiently large as new vertices, and add edges between all vertices so that the new graph $G^{k+1}$ is complete.

\paragraph{Max1:} This is another \revise{variant} of Scheme 1. Step 1 is the same as in {\bf Max}. However, in Step 2, we {\em only} add the $w_{ij}$ corresponding to the largest $X^k_{ij}$ (if $X^k_{ij}$ exceeds a certain threshold) as a new vertex. We then add edges between all vertices so that the new graph $G^{k+1}$ is complete.

\paragraph{Adaptive $\Delta$-partition:} This is also a \revise{variant} of Scheme 1. Step 1 is the same as in {\bf Max}. For Step 2, the way of adding vertices is the same as in {\bf Max1}. However, the way we add edges mimics the approach introduced in \cite{BunDur09}, which maintains the set of edges as that of a simplicial partition. Specifically, we subdivide the edge corresponding to the $w_{ij}$ we added, and subdivide all the simplices containing that edge.

\paragraph{\revise{Forgetfulness:}} This is a \revise{variant} of Scheme 2. We perform Step 1 as in {\bf Max}. As for Step 2, we add all $w_{ij}$ whose $X^k_{ij}$ is sufficiently large as new vertices to the original graph $G^0$. We then add edges to join each newly added vertex to all vertices in $G^0$.\\

In Section~\ref{sec:ran}, we compare the strategies {\bf Max}, {\bf Adaptive $\Delta$-partition} and \revise{{\bf Forgetfulness}} on random instances of \eqref{P0}. We will also present results obtained via {\bf $\Delta$-partition} (with $k=2$) as benchmark. \rerevise{In Section~\ref{sec:stQP}, we will look at how {\bf Forgetfulness} performs on standard quadratic programs.} In Section~\ref{sec:combin}, we will first review the standard completely positive programming formulation of the stable set problem, and then examine how {\bf Max1} performs for some standard test graphs.

\subsection{Random instances}\label{sec:ran}

In order to test the performance of our method in a generic setting, we test it for randomly generated instances of problem \eqref{P0}. We generate our objective function by setting $C=M^TM$ where $M$ is an \revise{$n \times n$} matrix with i.i.d. standard Gaussian entries, guaranteeing strict feasibility of \eqref{P0dual}. Furthermore, we generate the constraints by setting $A_i=(M_i+M_i^T)/2$, where the $M_i$ are also \revise{$n \times n$} matrices with i.i.d. standard Gaussian entries, and choosing $b_i$ such that $b_i={\rm tr}(A_i(E+nI))$. This guarantees strict feasibility of \eqref{P0}.

For the first of our tests we varied the number of variables, $n$, and the number of constraints $m$, so that $n$ is either $10$ or $25$ and $m$ is either $5$, $10$ or $15$. Given the complexity of copositive programming, there is actually no reliable way to find the true solution for these problems and there is no available implemented method that can generate lower bounds with which to compare our results. As a work-around, throughout this section we will compare the results we obtain with the classical (and somewhat coarse) lower bound provided by replacing $\CP^n$ by $\cS_+^n\cap \N^n$ in problem \eqref{P0}. We will use the difference of our approximations to this lower bound, normalized by \revise{dividing it by the bound}, as a proxy for the quality of the methods, and will simply denote it by \revise{relative} gap. \revise{Precisely, this quantity is defined by $\textup{gap}(x)=\frac{x-x^*}{|x^*|}$, where $x$ is the objective value attained by the method being studied and $x^*$ the doubly nonnegative lower bound.} This makes it somewhat easier to compare different methods across different instances of the problem. \revise{The drawback is that the gap we compute is actually the sum of the gaps of the proposed method and the doubly nonnegative approximation, which we don't know how to independently estimate.}

\begin{table}[h]
	
	\begin{center}
		\begin{tabular}{|c|c|c|c|c|c|c|c|c|c|}
			\hline
		\multicolumn{2}{|c|}{} &	\multicolumn{2}{|c|}{Max} & \multicolumn{2}{|c|}{Adaptive $\Delta$-Partition} & \multicolumn{2}{|c|}{Forgetfulness}& \multicolumn{2}{|c|}{$\Delta$-Partition}\\\hline
		     n  &   m  &
			time\revise{(sec)} & Relative Gap &
			time\revise{(sec)} & Relative Gap &
			time\revise{(sec)} & Relative Gap &
			time\revise{(sec)} & Relative Gap  \\ \hline

10 &  5  &   8.2 & 5.035e-02 &  25.3 & 6.362e-02 &   13.0 & 2.006e-02 &   4.6 & 4.620e-01 \\
10 & 10  &  19.5 & 2.281e-02 &  25.3 & 7.920e-02 &   23.0 & 1.849e-02 &   4.5 & 4.095e-01 \\
10 & 15  &  41.7 & 1.212e-02 &  27.6 & 8.207e-02 &   27.0 & 1.179e-02 &   5.0 & 2.995e-01 \\

25 &  5 &  23.0 & 6.748e-01 &  55.1 & 5.828e-01  &  38.4 & 2.975e-01  &  --- & --- \\	
25 & 10 &  45.8 & 4.660e-01 &  62.9 & 7.841e-01  &  52.7 & 2.020e-01  &  --- & --- \\
25 & 15 &  71.8 & 3.715e-01 &  56.1 & 8.565e-01  &  61.5 & 1.545e-01  &  --- & --- \\	

			\hline
		\end{tabular}
	\caption{Comparison of different iterative approaches\label{tab1}}
	\end{center}	
\end{table}

The results obtained can be seen in \revise{Table~\ref{tab1}}, where we present both the average gaps and the average running time for the studied methods. A few technical details are needed to be able to replicate the experiment. The results presented are averages of 30 instances per parameter pair. Moreover we fix the maximum number of iterations for the Max, Adaptive $\Delta$-partition and \revise{Forgetfulness schemes} as, respectively, $5$, $20$ and $15$ for $n=10$ and $5$, $15$ and $12$ for $n=25$. This was done (in an ad hoc way) to try to keep the average execution time as similar as possible across iterative methods, so that a fair comparison can be made. Also, since the maximalist approach can occasionally explode in size, we also stop this approach early when $t_{k+1}> 200$ (Recall that $U^k\in \R^{t_k\times n}_+$ for all $k$). For the \revise{forgetfulness} approach, we prune the $U^k$ in each step by removing redundant rows: we compute $\delta^k_{ij} := \|u^k_i-u^k_j\|_1$, where $u^k_i$ and $u^k_j$ are the $i$-th and the $j$-th rows of $U^k$ respectively, $j > i$, and discard $u^k_j$ if $\delta^k_{ij}< 10^{-6}$. We also stop this approach early when $t_{k+1}> 200$ for the $U^{k+1}$ after pruning.
The static $\Delta$-partition is not computed for $n=25$ as it \revise{takes too long}. 

These results show that the \revise{Forgetfulness scheme} dominates the others in all categories as far as the relation quality/time is concerned. The \revise{relative gaps} of the attained solutions jumps from \revise{between $1\%$ and $2\%$ for $n=10$ to between $15\%$ and $30\%$} for $n=25$. Once again, we stress that these are upper bounds for the \revise{Forgetfulness scheme} quality as well as for the doubly nonnegative approximation quality, and we cannot separate the contributions from each method.

We also plot in Figure~\ref{fig:fgtgap} the evolutions of the gaps for the \revise{Forgetfulness scheme} for $10$ random instances of the problem \eqref{P0} with $n=25$ and $m=10$. We can see the logarithmic scale plot of the gap as iterations increase, and the diminishing returns in improvement percentage. Again, note that the true gap might actually be decreasing faster, as what we are seeing is the gap to the doubly nonnegative lower bound.

\begin{figure}

\includegraphics[width=\linewidth]{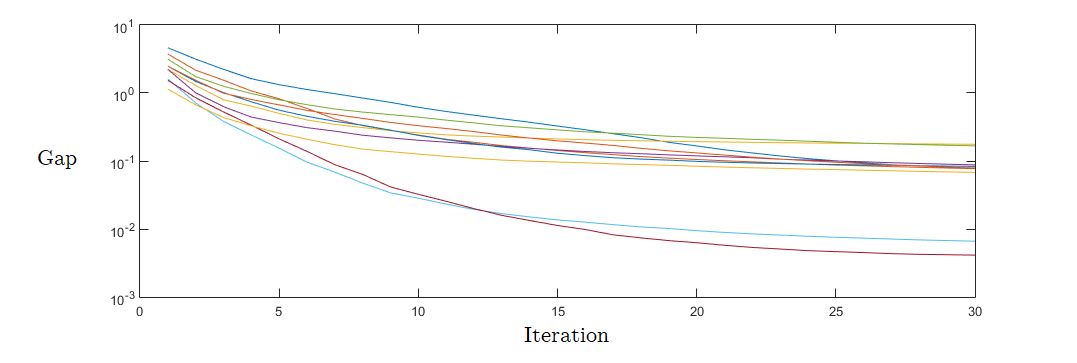}

\caption{Evolution of the gap for the \revise{forgetfulness scheme} as iterations increase}
\label{fig:fgtgap}
\end{figure}


\subsection{Standard Quadratic Program}\label{sec:stQP}

We now focus on a class of more structured completely positive programs, those coming from standard quadratic programs. A standard quadratic optimization problem (SQP) consists of finding global minimizers for a quadratic form over the standard simplex. In other words, given $p(x)=x^TQx$ for some $Q \in {\cal S}^n$, we want to find its minimum over the simplex $\Delta=\{x \in \R^n_+ \ : \ \sum x_i = 1\}$. It is shown in \cite{Bom00} that this can be written as the completely positive program
\begin{equation}\label{SQP}
  \begin{array}{rl}
    p^*:=\min & {\rm tr}(QX)\\
    {\rm s.t.} & {\rm tr}(E X) = 1,\\
    & X \in \CP^n,
  \end{array}
\end{equation}
where $E$ is the all ones matrix. It is not difficult to see that these problems always verify the blanket assumptions presented in Section \ref{sec1.1}. Furthermore, since
$\frac{1}{n}I_n$ is feasible, our hierarchy can always start from the base $\SDD$ relaxation.

To illustrate the behaviour of our method we start by taking the four concrete examples collected in \cite{Bom02} from several domains of application and applying the Forgetfulness scheme. We get the encouraging results shown in Table~\ref{Table:exSQP}, where we can see the source of the examples, their size $n$, their true solution $p^*$, and the approximate solution obtained by the Forgetfulness scheme in $5$ iterations (reported under the column approx.). We can see that the third example is the only one where there is a significant deviation from the optimal value, and all the results were attained in a few seconds.

\begin{table}[h]
\begin{center}
\begin{tabular}{|l|c|c|c|c|} \hline
Example                                                          & $n$  &  $p^*$  &  approx.    \\ \hline
\cite[Example~5.1]{Bom02} - independence number of pentagon                & $5$  &  $1/2$ & $0.50000$              \\ \hline
\cite[Example~5.2]{Bom02} - independence number of icosahedron complement  & $12$  & $ 1/3$  & $0.33333$              \\ \hline
\cite[Example~5.3]{Bom02} - math. model of population genetics  & $5$  &  $-16\frac{1}{3}$  &  $-16.331$              \\ \hline
\cite[Example~5.4]{Bom02} - portfolio optimization  & $5$  &  $0.4839$  & $0.4839$               \\ \hline
\end{tabular}
\caption{Applying the Forgetfulness scheme in four small SQP examples} \label{Table:exSQP}
\end{center}
\end{table}

To further explore the behaviour of our approach we followed the idea of \cite{Bom02} to generate random instances of SQP. In that paper they generate matrices $Q$ to be $10 \times 10$, symmetric and with entries uniformly distributed in the interval $[0,1]$. However, solving five thousand random examples of such problems to global optimality with CPLEX, we noticed that the true solutions seem to be commonly in the vertices of the simplex ($48.5\%$ of observed instances) or in edges ($40.1\%$ of observed instances). But in those two cases, by Theorem~\ref{prop:sddUconehull} our relaxation finds the optimal value at the first step. In other words, simply replacing $\CP^{10}$ by $\SDD_*^{10}$ gives us the exact solution in $88.6\%$ of the times, with our iterative procedure only kicking in in the remaining instances.

To get a more meaningful test, we generated symmetric matrices $Q$ with diagonal $1$ and only off-diagonal entries uniformly distributed in the interval $[0,1]$. Experimentally this virtually never gives rise to optimal solutions in the edges of the simplex, leading to non-trivial instances.
We tested for both $n=10$ and $n=15$, comparing the results against the true value obtained using CPLEX.
The parameters were chosen in the same way as in the previous section with the number of iterations being $15$ for $n=10$ and $12$ for $n=15$.

\begin{figure}
\begin{center}
\includegraphics[width=0.45\linewidth]{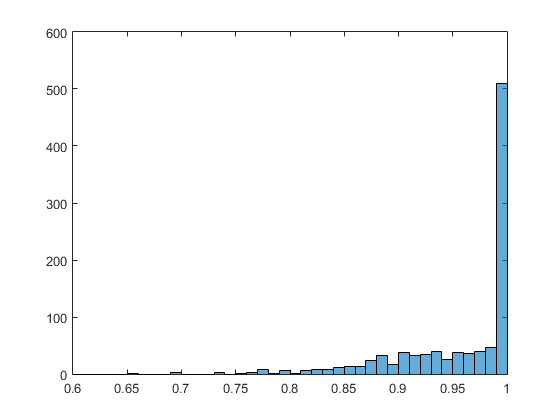} \hspace{1cm} \includegraphics[width=0.45\linewidth]{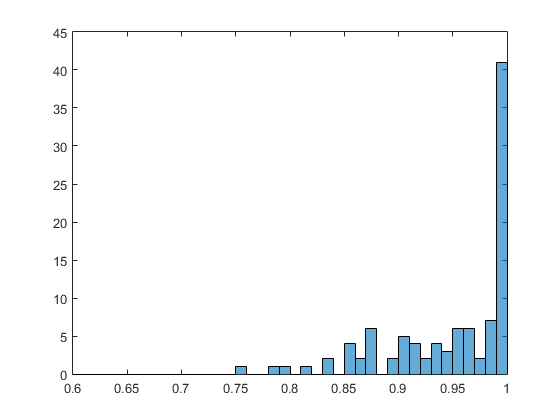} \\
\includegraphics[width=0.45\linewidth]{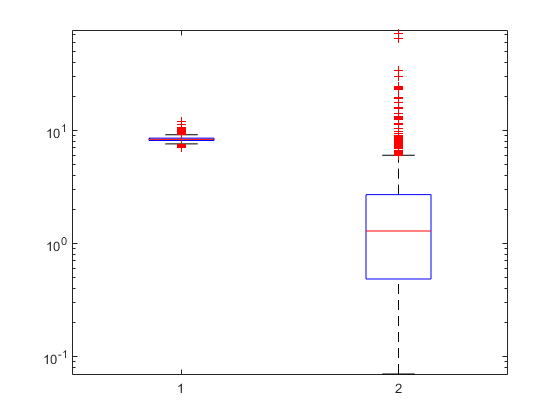} \hspace{1cm} \includegraphics[width=0.45\linewidth]{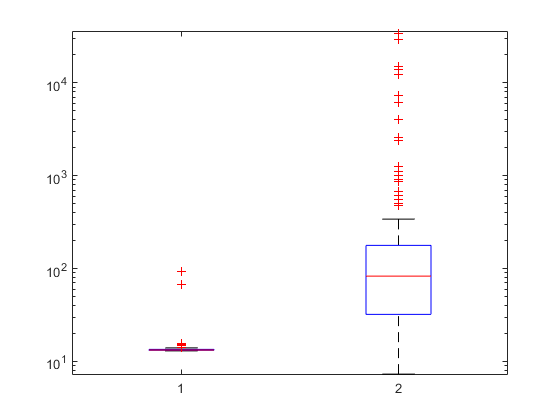} \\
\caption{Results of the random SQP tests for $n=10$ (left) and $n=15$ (right). The top graphs are histograms of the ratio between the true value and the attained upper bound, the bottom graphs box plots of the CPU times (in seconds) used by our method (1) and CPLEX (2).} \label{fig:SQPresults}
\end{center}
\end{figure}

We ran $1000$ instances for $n=10$ and $100$ for $n=15$. The results are presented in Figure \ref{fig:SQPresults}. On the top row we show the histograms for the ratios between the true value, computed using CPLEX, and our computed approximation, which provides an upper bound. We can see among other things that in both cases around half the instances were within $1\%$ of the true value and four fifths were within $10\%$. If we want to more directly compare it with the results attained for the random instances in Table~\ref{tab1}, one can compute the mean value of the true relative gap $\frac{\hat p-p^*}{p^*}$ where $p^*$ is the true optimal value returned by CPLEX and $\hat p$ is the approximate value obtained by our approach. We get $4.823\times 10^{-2}$ and $5.747\times 10^{-2}$ for $n=10$ and $n=15$ respectively, very much in line to what we have seen before.

On the bottom row of Figure~\ref{fig:SQPresults}, as a rough reference, we have the boxplots of the CPU times (in seconds) taken by our method and CPLEX, presented here in logarithmic scale for readability. In both cases we can see that the Forgetfulness scheme is quite stable, as it will simply stop after a set number of iterations, while CPLEX has a huge number of outliers. While for $n=10$ the exact CPLEX computation is faster, in $n=15$ it becomes much slower, with several outliers taking many hours. For larger values of $n$ it quickly becomes prohibitively slow compared to our approach.

\subsection{Stable set problems}\label{sec:combin}

While in the previous section we focus on random problems, the main focus of the completely positive/copositive programming literature has been in highly structured combinatorial optimization problems.
One of the most common applications is to the stable set problem, i.e., the problem of finding in a graph $G$ a  set of vertices of maximal cardinality such that no two are connected with an edge. The cardinality
\revise{of such a set is known as} the independence number of $G$, denoted by $\alpha(G)$.
In \cite[Equation~(8)]{de2002approximation}, the following completely positive formulation was introduced for that problem.
\begin{equation}\label{stableset}
\begin{array}{rl}
{\alpha(G)}=\max & {\rm tr}(EX)\\
{\rm s.t.} & {\rm tr}((A_G+I)X) =1,\\
& X \in \CP^n,
\end{array}
\end{equation}	
where $A_G$ is the adjacency matrix of $G$.

In this setting we have \revise{a single constraint}, so $m=1$. Our inner approximations of $\CP^n$ will yield in this case lower bounds, from which one might be able to extract an actual feasible stable set with given cardinality. There are a number of good heuristic approaches to the stable set problem with good results, as there exist implementations of exact algorithms that can handle small to medium sized graphs, all performing necessarily much better than our all-purpose conic programming approach. However, we can still see how our approach performs on its own, to get some indication of its performance on low codimension structured problems.

In this class of problems, symmetry and structure likely imply that the growth of the matrix $U$ in  the greedier Maximalist approach but also in the Forgetfulness approach is too fast and adds too much redundancy.
To avoid this phenomenon we take the Max1 approach: at every iteration we only add to $U$ the vertex that has the largest weight in the solution found. This yields \revise{a greedy sort of algorithm}, that in practice tends to grow the stable set greedily one by one. We stopped as soon as the greedy process got stuck and there was no improvement in two consecutive iterations.

We computed both stability numbers, $\alpha(G)$, and clique numbers, \revise{$\omega(G)$}, which are simply the stability numbers of the complementary graph. Following \cite{BunDur09}, we started by computing the clique numbers of the graphs where their method was tested.
\revise{Our method yields the correct answers in a relatively short time, as can be seen in Table~\ref{tabclique}, where our results are presented under the column ``result", and the column ``\revise{$\omega(G)$}" corresponds to the known clique numbers. Note that this is not too surprising, as finding a large stable set, or clique, is in a general sense computationally easier than proving that a larger one does not exist. In other words, lower bounding the stable set and clique numbers of particular graphs tends to be easier than upper bounding them, so our problem has a smaller scope than what was attempted in \cite{BunDur09}, leading to much faster times.}
The graphs in the table come from two sources, the first is a $17$ vertex graph from \cite{Pen07} that is notoriously hard for upper bounding by convex approximations, the other five come from the 2nd DIMACS implementation challenge test instances \cite{DIMACS}, and only hamming6-4 and johnson8-2-4 could be solved by Bundfuss and D\"{u}r's method in less than two hours as reported in their paper \cite{BunDur09}.

\begin{table}[h]
	\begin{center}
	
		\begin{tabular}{|c | c |c |c | c|c |}	\hline
			graph & vertices & iterations & time(sec) &  result & \revise{$\omega(G)$}\\	\hline
			
		    pena17        &   17 &  5 &  13.8 &   6.0000&   6\\
		
			hamming6-2    &  64 &  31 & 836.7 &  32.0000 &  32\\
			
			hamming6-4    &   64&   3 &  64.0 &   4.0000 &   4\\
			
			johnson8-2-4  &  28 &   3 &  11.7 &   4.0000 &   4\\
			
			johnson8-4-4  &  70 &  13 & 322.5 &  14.0000  &  14\\
			
			johnson16-2-4 &  120 &  7 & 637.0 &   8.0000 &   8\\		
			
			\hline
		\end{tabular}
		\caption{Clique number for different graphs}
	\label{tabclique}
	\end{center}
	
\end{table}

To explore the limits of our approach we tried a few more instances of the stable set problem. We tried Paley graphs, known to mimic some properties of random graphs, with some degree of success, and a few small-sized instances of graphs derived from error correcting codes, available at \cite{slo05}. The results are much worse in this family, with our algorithm failing in small instances, as can be seen in Table~\ref{tabstab}, where our results are reported under the column ``result", and the true stability numbers are presented under the column ``$\alpha(G)$". One word of caution is that the entire procedure is highly unstable, and simply changing the solver from MOSEK to SDPT3 can result in changes in the result, e.g. Paley$_{137}$ becomes exact in SDPT3.

\begin{table}[h]
	\begin{center}

		\begin{tabular}{|c | c |c |c | c|c|}	\hline
			graph & vertices & iterations & time(sec) & result & $\alpha(G)$ \\	\hline
			Paley$_{137}$ & 137 & 4 & 977.4 &  5.0000  &  7\\
			Paley$_{149}$ & 149 & 6 & 1841.6 &  7.0000 &  7\\
			Paley$_{157}$ & 157 &   6 & 2254.1 &  7.0000 &  7\\			
			1tc.16 & 16 &    6 &  15.7 &   7.0000 & 8\\			
			1tc.32 &  32 &  10 &  85.5 &  11.0000 & 12\\			
			1dc.64 &  64 &   7 & 235.8 &   8.0000 & 10\\			
			1dc.128 &  128 &  13 & 2491.0 &  14.0000 & 16\\
			2dc.128 &  128 &   4 & 823.6 &   5.0000 & 5 \\
		\hline
		\end{tabular}
	\caption{Stability number for different graphs}
	\label{tabstab}

	\end{center}
	
\end{table}


\bibliographystyle{plain}
\bibliography{ref}

\begin{thebibliography}{10}

\bibitem{abraham2003completely}
Berman Abraham and Shaked-monderer Naomi.
\newblock {\em Completely positive matrices}.
\newblock World Scientific, 2003.

\bibitem{ahmadi2017optimization}
Amir~Ali Ahmadi, Sanjeeb Dash, and Georgina Hall.
\newblock Optimization over structured subsets of positive semidefinite
  matrices via column generation.
\newblock {\em Discrete Optimization}, 24:129--151, 2017.

\bibitem{AhmHal17}
Amir~Ali Ahmadi and Georgina Hall.
\newblock {\em Sum of Squares Basis Pursuit with Linear and Second Order Cone
  Programming}.
\newblock Contemporary Mathematics, 2017.

\bibitem{Ahm14}
Amir~Ali Ahmadi and Anirudha Majumdar.
\newblock Dsos and sdsos optimization: more tractable alternatives to sum of
  squares and semidefinite optimization.
\newblock {\em SIAM Journal on Applied Algebra and Geometry}, 3(2):193--230,
  2019.

\bibitem{boman2005factor}
Erik~G. Boman, Doron Chen, Ojas Parekh, and Sivan Toledo.
\newblock On factor width and symmetric h-matrices.
\newblock {\em Linear Algebra and its Applications}, 405:239--248, 2005.

\bibitem{Bom02}
Immanuel~M. Bomze and Etienne de~Klerk.
\newblock Solving standard quadratic optimization problems via linear,
  semidefinite and copositive programming.
\newblock {\em Journal of Global Optimization}, 24(2):163--185, 2002.

\bibitem{Bom00}
Immanuel~M. Bomze, Mirjam D\"{u}r, Etienne de~Klerk, Cornelis Roos, Arie~J.
  Quist, and Tam\'{a}s Terlaky.
\newblock On copositive programming and standard quadratic optimization
  problems.
\newblock {\em Journal of Global Optimization}, 18(4):301--320, 2000.

\bibitem{Bom12}
Immanuel~M. Bomze, Werner Schachinger, and Gabriele Uchida.
\newblock Think co(mpletely) positive! {M}atrix properties, examples and a
  clustered bibliography on copositive optimization.
\newblock {\em Journal of Global Optimization}, 52(3):423--445, 2012.

\bibitem{BunDur09}
Stefan Bundfuss and Mirjam D\"{u}r.
\newblock An adaptive linear approximation algorithm for copositive programs.
\newblock {\em SIAM Journal on Optimization}, 20(1):30--53, 2009.

\bibitem{Bur09}
Samuel Burer.
\newblock On the copositive representation of binary and continuous nonconvex
  quadratic programs.
\newblock {\em Mathematical Programming}, 120(2):479--495, 2009.

\bibitem{Bur12}
Samuel Burer.
\newblock Copositive programming.
\newblock In {\em Handbook on Semidefinite, Conic and Polynomial Optimization},
  pages 201--218. Springer, 2012.

\bibitem{de2002approximation}
Etienne de~Klerk and Dmitrii~V. Pasechnik.
\newblock Approximation of the stability number of a graph via copositive
  programming.
\newblock {\em SIAM Journal on Optimization}, 12(4):875--892, 2002.

\bibitem{Dur10}
Mirjam D{\"u}r.
\newblock Copositive programming -- a survey.
\newblock {\em Recent Advances in Optimization and its Applications in
  Engineering}, 320, 2010.

\bibitem{cvx}
Michael Grant and Stephen Boyd.
\newblock {CVX}: Matlab software for disciplined convex programming, version
  2.1.
\newblock {http://cvxr.com/cvx}, 2014.

\bibitem{HornJohnson85}
Roger~A. Horn and Charles~R. Johnson.
\newblock {\em Matrix Analysis}.
\newblock Cambridge University Press, 1985.

\bibitem{DIMACS}
David~J. Johnson and Michael~A. Trick.
\newblock Cliques, colorings and satisfiability. 2nd {DIMACS} implementation
  challenge, 1993.
\newblock {\em American Mathematical Society}, 492:497, 1996.

\bibitem{Lasserre14}
Jean~B. Lasserre.
\newblock New approximations for the cone of copositive matrices and its dual.
\newblock {\em Mathematical Programming}, 144(1):265--276, 2014.

\bibitem{LoboVandenBoydLe98}
Miguel~Sousa Lobo, Lieven Vandenberghe, Stephen Boyd, and Herv\'{e} Lebret.
\newblock Applications of second-order cone programming.
\newblock {\em SIAM Journal on Optimization}, 12(4):875--892, 2002.

\bibitem{Par00}
Pablo~A. Parrilo.
\newblock {\em Structured Semidefinite Programs and Semialgebraic Geometry
  Methods in Robustness and Optimization}.
\newblock PhD thesis, California Institute of Technology, 2000.

\bibitem{Pen07}
Javier Pena, Juan Vera, and Luis~F. Zuluaga.
\newblock Computing the stability number of a graph via linear and semidefinite
  programming.
\newblock {\em SIAM Journal on Optimization}, 18(1):87--105, 2007.

\bibitem{Permenter2017}
Frank Permenter and Pablo Parrilo.
\newblock Partial facial reduction: simplified, equivalent sdps via
  approximations of the psd cone.
\newblock {\em Mathematical Programming}, 171:1--54, 2018.

\bibitem{Rock70}
Ralph~Tyrrell Rockafellar.
\newblock {\em Convex Analysis}.
\newblock Princeton University Press, 1970.

\bibitem{RockWets98}
Ralph~Tyrrell Rockafellar and J-B.~Roger Wets.
\newblock {\em Variational Analysis}.
\newblock Springer, 1998.

\bibitem{slo05}
Neil Sloane.
\newblock Challenge problems: Independent sets in graphs.
\newblock {\em Information Sciences Research Center,
  {https://oeis.org/A265032/a265032.html}}, 2005.

\bibitem{Yil12}
E.~Alper Y{\i}ld{\i}r{\i}m.
\newblock On the accuracy of uniform polyhedral approximations of the
  copositive cone.
\newblock {\em Optimization Methods and Software}, 27(1):155--173, 2012.

\bibitem{Yil16}
E.~Alper Y{\i}ld{\i}r{\i}m.
\newblock Inner approximations of completely positive reformulations of mixed
  binary quadratic programs: a unified analysis.
\newblock {\em Optimization Methods and Software}, 32(6):1163--1186, 2017.

\end{thebibliography}

\end{document}